\begin{document}

	
	\renewcommand{\d}{d}
	
	\newcommand{\E}{\mathbb{E}}
	\newcommand{\PP}{\mathbb{P}}
	\newcommand{\DD}{\mathbb{D}}
	\newcommand{\R}{\mathbb{R}}
	\newcommand{\cD}{\mathcal{D}}
	\newcommand{\cF}{\mathcal{F}}
	\newcommand{\cK}{\mathcal{K}}
	\newcommand{\N}{\mathbb{N}}
	\newcommand{\fracs}[2]{{ \textstyle \frac{#1}{#2} }}
	\newcommand{\sign}{\operatorname{sign}}
	
	\newtheorem{theorem}{Theorem}[section]
	\newtheorem{lemma}[theorem]{Lemma}
	\newtheorem{coro}[theorem]{Corollary}
	\newtheorem{defn}[theorem]{Definition}
	\newtheorem{assp}[theorem]{Assumption}
	\newtheorem{expl}[theorem]{Example}
	\newtheorem{prop}[theorem]{Proposition}
	\newtheorem{proposition}[theorem]{Proposition}
	\newtheorem{corollary}[theorem]{Corollary}
	\newtheorem{remark}[theorem]{Remark}
	\newtheorem{notation}[theorem]{Notation}

	\def\a{\alpha} \def\g{\gamma}
	\def\e{\varepsilon} \def\z{\zeta} \def\y{\eta} \def\o{\theta}
	\def\vo{\vartheta} \def\k{\kappa} \def\l{\lambda} \def\m{\mu} \def\n{\nu}
	\def\x{\xi}  \def\r{\rho} \def\s{\sigma}
	\def\p{\phi} \def\f{\varphi}   \def\w{\omega}
	\def\q{\surd} \def\i{\bot} \def\h{\forall} \def\j{\emptyset}
	
	\def\be{\beta} \def\de{\delta} \def\up{\upsilon} \def\eq{\equiv}
	\def\ve{\vee} \def\we{\wedge}
	
	\def\t{\tau}
	
	\def\F{{\cal F}}
	\def\T{\tau} \def\G{\Gamma}  \def\D{\Delta} \def\O{\Theta} \def\L{\Lambda}
	\def\X{\Xi} \def\S{\Sigma} \def\W{\Omega}
	\def\M{\partial} \def\N{\nabla} \def\Ex{\exists} \def\K{\times}
	\def\V{\bigvee} \def\U{\bigwedge}
	
	\def\1{\oslash} \def\2{\oplus} \def\3{\otimes} \def\4{\ominus}
	\def\5{\circ} \def\6{\odot} \def\7{\backslash} \def\8{\infty}
	\def\9{\bigcap} \def\0{\bigcup} \def\+{\pm} \def\-{\mp}
	\def\<{\langle} \def\>{\rangle}
	
	\def\lev{\left\vert} \def\rev{\right\vert}
	\def\1{\mathbf{1}}

	\newcommand\wD{\widehat{\D}}
	\newcommand\EE{\mathbb{E}}
	
	\newcommand{\ls}[1]{\textcolor{red}{\tt Lukas: #1 }}

	\title{ \bf The  order barrier for the $L^1$-approximation of the log-Heston SDE at a single point}
	
	\author{Annalena Mickel \footnote{Mathematical Institute and   DFG Research Training Group 1953, 
			University of Mannheim,  B6, 26, D-68131 Mannheim, Germany,  \texttt{amickel@mail.uni-mannheim.de}}
		\and   Andreas Neuenkirch  \footnote{Mathematical Institute, 
			University of Mannheim,  B6, 26, D-68131 Mannheim, Germany, \texttt{aneuenki@mail.uni-mannheim.de}} 
	}

	\date{\today}
	
	\maketitle
	
	\begin{abstract} 
		We study the $L^1$-approximation of the log-Heston SDE at the terminal time point by arbitrary methods that use an equidistant discretization of the driving Brownian motion. We show that such methods can achieve at most order $  \min \{ \nu, \tfrac{1}{2} \}$, where $\nu$ is the Feller index of the underlying CIR process. As a consequence Euler-type schemes are optimal for  $\nu \geq 1$, since  they have convergence order $\tfrac{1}{2}-\varepsilon$ for $\varepsilon >0$ arbitrarily small in this regime.
		\medskip
		
		\noindent \textsf{{\bf Key words: } \em
			Heston model, CIR process, $L^1$-error, lower error bounds, optimal approximation, Euler-type methods
			\\
		}
		\medskip
		\medskip
		\noindent{\small\bf 2010 Mathematics Subject Classification: 65C30;  60H35;  91G60 }
		
	\end{abstract}

\section{Introduction and main results}
The log-Heston model is a popular stochastic volatility model in mathematical finance.
It is given by the two-dimensional stochastic differential equation (SDE)
\begin{equation}  
	\begin{aligned}
		dX_t &= \big(\mu-\tfrac12 V_t\big) dt + \sqrt{V_t} \Big(\rho d W_t + \sqrt{1-\rho^2} d B_t\Big), \\
		dV_t &= \kappa (\theta-V_t) dt + \sigma \sqrt{V_t} dW_t, \label{MiNe:hes-log} 
	\end{aligned}\qquad t\in[0,T].
\end{equation}
Here $W=(W_t)_{t \in [0,T]}$,  $B=(B_t)_{t \in [0,T]}$ are two independent Brownian motions,  $\rho \in [-1,1]$,
$\mu \in \mathbb{R}$, $\kappa, \theta, \sigma >0$ and the initial values are assumed to be deterministic, i.e., $X_0=x_0$ and $V_0=v_0$ with $x_0 \in \mathbb{R}$ and $v_0>0$.

The stochastic volatility process $(V_t)_{t \in [0,T]}$ of the log-asset price process $(X_t)_{t \in [0,T]}$ is the so-called CIR process, which takes positive values only. This process goes back to the works of Feller in the 1950s, see \cite{MiNe:Feller}. The quantity
$$ \nu = \frac{2 \kappa \theta}{\sigma^2}$$
is called Feller index. Since the square-root function, which appears in the coefficients of SDE \eqref{MiNe:hes-log}, is not globally Lipschitz continuous, SDE \eqref{MiNe:hes-log} does not satisfy the so-called standard assumptions. Much effort has been devoted to the numerical analysis of SDEs under non-standard assumptions in the last 25 years, in particular to the CIR process and the Heston SDE, see, e.g.,  \cite{MiNe:JentzenHefter,MiNe:Mario, MiNe:MiNe} for a comprehensive survey on the strong approximation of the latter SDEs.

\smallskip

In this note, we focus on the optimal $L^1$-approximation of the full log-Heston SDE \eqref{MiNe:hes-log} at the terminal time point $T$ given an equidistant discretization of the driving Brownian motion, that is on
\begin{displaymath}
	e(N)= \inf_{u \in \mathcal{U}(N)}	\, \mathbb{E} \left[ \left\| u( W_{t_1},W_{t_2}, \ldots, W_{t_N}, B_{t_1},B_{t_2}, \ldots, B_{t_N})-\left(
	\begin{array}{c}
		X_T\\
		V_T\\
	\end{array}
	\right) \right \|_1\right],
\end{displaymath}
where $\mathcal{U}(N)$ is the set of measurable functions  $u \colon \mathbb{R}^{2N} \rightarrow \mathbb{R}^2$  and  $t_k=k \frac{T}{N}$ for all $k\in\{0,...,N\}$.

\smallskip

The optimal approximation of SDEs under standard assumptions, that is for SDEs with globally Lipschitz and sufficiently smooth coefficients, has been studied intensively since the pioneering work of Clark and Cameron \cite{MiNe:cam_clark} from 1980;  see Remark \ref{MiNe:cc-tmg}. Recently the analysis of the optimal approximation of SDEs has been extended to the case of non-standard coefficients, see \cite{MiNe:JMGY,MiNe:Yaros1,MiNe:Yaros2} and \cite{MiNe:HeHe,MiNe:HeHeMG,MiNe:JentzenHefter}.
While  the first group of articles constructs SDEs with arbitrary slow best possible convergence rates, the second group establishes results, which apply to the optimal approximation of the  CIR process. In particular, the works \cite{MiNe:HeHeMG} and \cite{MiNe:JentzenHefter} yield that
\begin{displaymath} 
	\liminf_{N \rightarrow \infty} \, N^{\min \{\nu,1 \}} \, \inf_{g \in \mathcal{G}(N)}	\, \mathbb{E}\left[  \big| g( W_{t_1},W_{t_2}, \ldots, W_{t_N}) -V_T \big|\right]  >0,
\end{displaymath}
where $\mathcal{G}(N)$ is the set of measurable functions $ g \colon \mathbb{R}^N \rightarrow \mathbb{R}$  and  $t_k=k \frac{T}{N}$ for  all $k\in\{0,...,N\}$.

\smallskip

So, which rates are  best possible for the approximation of the log-Heston SDE? If $|\rho| \neq 1$ SDE \eqref{MiNe:hes-log} has non-commutative noise, i.e., the diffusion coefficients 
$$ \sigma^{(1)}(x,v)= \left( \begin{array}{c} \rho \sqrt{v} \\ \sigma \sqrt{v}
\end{array} \right), \qquad \sigma^{(2)}(x,v) = \left( \begin{array}{c} \sqrt{1-\rho^2} \sqrt{v} \\ 0
\end{array} \right) $$
do not satisfy the commutativity condition
$$ \sum_{i=1}^{2} \sigma_i^{(1)} \partial_i\sigma^{(2)}=  \sum_{i=1}^{2} \sigma_i^{(2)} \partial_i\sigma^{(1)}.$$
Therefore, it is reasonable to  expect an additional cut-off of the best possible convergence order at $\tfrac{1}{2}$, as in the case of standard coefficients. This article is dedicated to 
confirm this expectation and to this end we 
establish the following new result:

\begin{theorem}  \label{MiNe:thm:low-bound} Let $|\rho| \neq 1$, $t_k=k\Delta t$ for all $k\in\{0,...,N\}$ with $\Delta t=\frac{T}{N}$, let $\mathcal{U}(N)$ be the set of measurable functions  $u \colon \mathbb{R}^{2N} \rightarrow \mathbb{R}^2$
	and
	$$	e(N)= \inf_{u \in \mathcal{U}(N)}	\, \mathbb{E} \left[ \left\| u( W_{t_1},W_{t_2}, \ldots, W_{t_N}, B_{t_1},B_{t_2}, \ldots, B_{t_N})-\left(
	\begin{array}{c}
		X_T\\
		V_T\\
	\end{array}
	\right) \right \|_1\right] 	.
	$$	
	Then we have   that
	\begin{displaymath}
		\liminf_{N \rightarrow \infty} \,\,  N^{\min\{\nu,\frac{1}{2}\}}  \, e(N) >0.
	\end{displaymath}
\end{theorem}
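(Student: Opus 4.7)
The idea is to exploit that $u=(u_1,u_2)\in\cU(N)$ has independent components and bound each contribution separately. Since
\begin{equation*}
e(N)=\inf_{u_1}\E|u_1(\cdot)-X_T|+\inf_{u_2}\E|u_2(\cdot)-V_T|\geq \max\bigl(e_X(N),\,e_V(N)\bigr),
\end{equation*}
and $V_T$ is $\sigma(W)$-measurable while $B$ is independent of $W$, the extra $B$-samples are useless for approximating $V_T$; hence the lower bound from \cite{MiNe:HeHeMG,MiNe:JentzenHefter} yields $e_V(N)\geq c_1 N^{-\min\{\nu,1\}}$. This already proves the theorem when $\nu\leq 1/2$, since then $\min\{\nu,1\}=\min\{\nu,\tfrac12\}$. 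For $\nu>1/2$ the plan is to show $e_X(N)\geq c_2 N^{-1/2}$; the extra cut-off at $\tfrac12$ should arise from the non-commutativity of the diffusion in \eqref{MiNe:hes-log} (explaining the hypothesis $|\rho|\neq1$), in the spirit of Clark-Cameron.

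To bound $e_X(N)$ I would enlarge the conditioning $\sigma$-algebra to $\cK:=\sigma(W_s:s\in[0,T])\vee\sigma(B_{t_0},\ldots,B_{t_N})$. Every $u_1\in\cU(N)$ is $\cK$-measurable, so $\E|X_T-u_1|\geq \E\bigl[\E[|X_T-\E[X_T\mid\cK]|\mid\cK]\bigr]$. Given $\cK$, all contributions to $X_T$ are determined except $\sqrt{1-\rho^2}\int_0^T\sqrt{V_s}\,dB_s$, whose conditional law is Gaussian with variance $(1-\rho^2)\Sigma_N^2$, where, setting $\overline{\sqrt V}_k:=\Delta t^{-1}\int_{t_{k-1}}^{t_k}\sqrt{V_s}\,ds$,
\begin{equation*}
\Sigma_N^2:=\sum_{k=1}^N\int_{t_{k-1}}^{t_k}\bigl(\sqrt{V_s}-\overline{\sqrt V}_k\bigr)^2\,ds.
\end{equation*}
The standard Gaussian $L^1$-identity then yields $\E|X_T-u_1|\geq \sqrt{2(1-\rho^2)/\pi}\,\E\Sigma_N$, so the task reduces to showing $\E\Sigma_N\geq c_3\sqrt{\Delta t}$.

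For this I would localize to $A_\varepsilon:=\{\inf_{s\in[0,T]}V_s\geq\varepsilon\}$, which has positive probability for $\varepsilon>0$ sufficiently small because $v_0>0$. On $A_\varepsilon$, Itô's formula applied to $x\mapsto\sqrt x$ gives $\sqrt{V_s}-\sqrt{V_{t_{k-1}}}=\tfrac{\sigma}{2}(W_s-W_{t_{k-1}})+R_{k,s}$ with $|R_{k,s}|\leq C(\varepsilon)\Delta t$. Centering about the interval average and using $(a+b)^2\geq a^2/2-b^2$ leads to
\begin{equation*}
\Sigma_N^2\,\mathbbm{1}_{A_\varepsilon}\geq \frac{\sigma^2}{8}\sum_{k=1}^N\int_{t_{k-1}}^{t_k}(W_s-\overline W_k)^2\,ds\;-\;C'(\varepsilon)\,T\,\Delta t^2,\qquad \overline W_k:=\frac{1}{\Delta t}\int_{t_{k-1}}^{t_k}W_s\,ds.
\end{equation*}
The leading sum is over $N$ independent contributions (disjoint $W$-increments), each of expectation $\Delta t^2/6$ and variance $O(\Delta t^4)$; Chebyshev's inequality forces it to exceed $\tfrac{T\Delta t}{12}$ with probability tending to $1$. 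Hence $\Sigma_N^2\geq c(\varepsilon)\Delta t$ on a subset of $A_\varepsilon$ of probability at least $\PP(A_\varepsilon)/2$ for large $N$, which delivers $\E\Sigma_N\geq c_3\sqrt{\Delta t}$, i.e.\ of order $N^{-1/2}$.

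The main obstacle will be the degeneracy of the CIR diffusion coefficient at zero: the Itô expansion of $\sqrt{V_t}$ is only usable where $V$ is bounded away from $0$, so the choice of $\varepsilon$ and the $N$-uniform positivity of $\PP(A_\varepsilon)$ are essential. A secondary subtlety is to ensure that the drift-type remainder $R_{k,s}$ does not swallow the $O(\sqrt{\Delta t})$ signal: the crude inequality $(a+b)^2\geq a^2/2-b^2$ is adequate only once localization pins $V$ away from $0$, giving the quadratic gap between the $O(\sqrt{\Delta t})$ Brownian fluctuation and the $O(\Delta t)$ drift error.
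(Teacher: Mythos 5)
Your overall strategy is sound and arrives at the right bound, but it differs technically from the paper's route. You and the paper both begin by splitting $e(N)=e_X(N)+e_V(N)$, invoking the Hefter--Herzwurm--M\"uller-Gronbach/Jentzen--Hefter bound for $e_V$, and enlarging the information to $\mathcal{K}=\sigma(W)\vee\sigma(B_{t_1},\ldots,B_{t_N})$. Both proofs also lean on the Lamperti-type decomposition $\sqrt{V_t}=\sqrt{v_0}+A_t+\tfrac{\sigma}{2}W_t$ and on the Brownian-bridge split $B=\overline B+B^\circ$. The divergence is in the final step: the paper first uses integration by parts to rewrite $\int_0^T\sqrt{V_s}\,dB_s$ as a $\mathcal{K}$-measurable part plus $\int_0^T A_t\,dB_t-\tfrac{\sigma}{2}\int_0^T B_t\,dW_t$, discards $\int A\,dB$ as an $O(N^{-5/8})$ perturbation, and then applies a symmetrization to $\mathcal{I}(B^\circ,W)=\int B^\circ\,dW$, whose $L^1$-norm it computes \emph{exactly} (conditioning on $B^\circ$ rather than on $W$), yielding the explicit constant $c=\tfrac{\sigma T}{8}\sqrt{1-\rho^2}$ with no localization. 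You instead keep $\int\sqrt{V}\,dB^\circ$ as is, note its conditional law given $\mathcal{K}$ is Gaussian with variance $\Sigma_N^2=\sum_k\int_{t_{k-1}}^{t_k}(\sqrt{V_s}-\overline{\sqrt V}_k)^2\,ds$, and lower bound $\mathbb{E}[\Sigma_N]$ via the pathwise It\^o expansion plus a Chebyshev concentration argument. This is correct in spirit and more ``hands-on'', but it forces a localization and delivers only a qualitative constant; the paper's integration-by-parts route is precisely what lets it avoid localization and be quantitative.

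One genuine flaw you should fix: on $A_\varepsilon=\{\inf_s V_s\geq\varepsilon\}$ alone the remainder bound $|R_{k,s}|\leq C(\varepsilon)\Delta t$ is false, since the drift of $\sqrt{V}$ is $a_u=(\tfrac{\kappa\theta}{2}-\tfrac{\sigma^2}{8})V_u^{-1/2}-\tfrac{\kappa}{2}V_u^{1/2}$ and the second term is unbounded above; you must intersect with $\{\sup_{[0,T]}V_s\leq M\}$ (which still has $N$-independent positive probability) to get a uniform bound on $a$, and only then does the inequality $\Sigma_N^2\mathbbm{1}_{A_{\varepsilon,M}}\geq\tfrac{\sigma^2}{8}\sum_k\int(W_s-\overline W_k)^2\,ds-C'\Delta t^2$ hold. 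You should also spell out the final intersection: the Chebyshev event $E_N=\{\sum_k\int(W_s-\overline W_k)^2\,ds\geq T\Delta t/12\}$ and $A_{\varepsilon,M}$ are both $\sigma(W)$-measurable (hence correlated), so use $\mathbb{P}(A_{\varepsilon,M}\cap E_N)\geq\mathbb{P}(A_{\varepsilon,M})-\mathbb{P}(E_N^c)\to\mathbb{P}(A_{\varepsilon,M})>0$ rather than treating them as independent. With those two patches the argument closes and yields the claimed $N^{-1/2}$ rate for $e_X(N)$, hence the theorem.
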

The so far best upper bound  for $e(N)$   has been established in \cite{MiNe:MiNe}, where suitable Euler-type discretizations as, e.g., $	\hat{x}_{0}=x_0, 	\hat{v}_{0}=v_0 $ and 
\begin{equation}  \label{MiNe:Euler-Hes}\begin{aligned}
		\hat{x}_{t_{k+1}}  =	\hat{x}_{t_{k}}   & + \big( \mu - \tfrac{1}{2} 	\hat{v}^+_{t_{k}} \big)(t_{k+1}-t_k) \\ &  +
		\sqrt{\hat{v}^+_{t_{k}}} \Big(\rho \big(W_{t_{k+1}}-W_{t_{k}}\big)  + \sqrt{1-\rho^2} \big(B_{t_{k+1}}-B_{t_{k}}\big)\Big),  \\	\hat{v}_{t_{k+1}} = \hat{v}_{t_{k}} &+ \kappa\big(\theta-\hat{v}^+_{t_{k}}\big)(t_{k+1}-t_k)+\sigma\sqrt{\hat{v}^+_{t_k}}\big(W_{t_{k+1}}-W_{t_{k}}\big),
	\end{aligned}
\end{equation}
for $k=0, \ldots, N-1$, have been analyzed.

\begin{proposition} Let  $t_k=k\Delta t$ for all $k\in\{0,...,N\}$ with $\Delta t=\frac{T}{N}$, let $(\hat{x}_N,\hat{v}_N)$ be given by \eqref{MiNe:Euler-Hes}	
	and let $\varepsilon >0$. Then we have 
	$$ 	\limsup_{N \rightarrow \infty} \, \, N^{\min \{ \frac{\nu}{2}, \frac{1}{2}\}- \varepsilon } \, e(N) \, \leq \, 
	\,  \limsup_{N \rightarrow \infty}\, \,  N^{\min \{ \frac{\nu}{2}, \frac{1}{2}\}- \varepsilon } \, \mathbb{E} \left[ \left\|  \left( \begin{array}{c}
		\hat{x}_N\\
		\hat{v}_N
	\end{array} \right) -\left(
	\begin{array}{c}
		X_T\\
		V_T\\
	\end{array}
	\right) \right \|_1\right] 	\,  = \, 0.
	$$	
\end{proposition}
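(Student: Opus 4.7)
The proposition restates results of \cite{MiNe:MiNe} for the Euler-type scheme \eqref{MiNe:Euler-Hes}, so my plan is to sketch how the two-dimensional error reduces to two one-dimensional estimates already available there. First I would use the elementary bound
\[
\mathbb{E}\left[\left\| \begin{pmatrix} \hat x_N \\ \hat v_N \end{pmatrix} - \begin{pmatrix} X_T \\ V_T \end{pmatrix}\right\|_1\right] \leq \mathbb{E}\bigl[|\hat x_N - X_T|\bigr] + \mathbb{E}\bigl[|\hat v_N - V_T|\bigr]
\]
and handle the two marginal errors separately. The $\limsup=0$ conclusion then follows from polynomial rate estimates with exponent $\min\{\nu/2,1/2\}-\varepsilon/2$ for each term.

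For $\mathbb{E}[|\hat v_N - V_T|]$, I would invoke the CIR convergence analysis from \cite{MiNe:MiNe}. The ingredients are (i) positive and negative moment bounds $\sup_{t\in[0,T]}\mathbb{E}[V_t^p] < \infty$ for $p > -\nu$, derived via Feller-type/Laplace transform estimates, (ii) analogous discrete moment bounds for $\hat v^+_{t_k}$ obtained by induction on $k$, and (iii) a Yamada--Watanabe-type comparison combined with a Gronwall argument to deal with the non-Lipschitz $\sqrt{\,\cdot\,}$ diffusion. These give the $L^1$-rate $\min\{\nu/2,1/2\}-\varepsilon$ for the CIR component.

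For $\mathbb{E}[|\hat x_N - X_T|]$, I would compare the continuous-time interpolation of the Euler scheme with the SDE for $X$. Writing $\eta(s)=t_k$ for $s\in[t_k,t_{k+1})$, the error splits into a drift part bounded by $\tfrac{1}{2}\int_0^T \mathbb{E}[|V_s - \hat v^+_{\eta(s)}|]\, ds$ and a martingale part controlled via It\^o isometry by $\int_0^T \mathbb{E}[(\sqrt{V_s} - \sqrt{\hat v^+_{\eta(s)}})^2]\, ds$. Using $|\sqrt a - \sqrt b|^2 \leq |a-b|$ for $a,b\geq 0$, both contributions reduce to bounding $\mathbb{E}[|V_s - \hat v^+_{\eta(s)}|]$, which by the triangle inequality is dominated by the CIR rate established in the previous step plus the time-regularity estimate $\mathbb{E}[|V_s - V_{\eta(s)}|] = O(N^{-1/2+\varepsilon})$. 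Combining these yields the same rate $\min\{\nu/2,1/2\}-\varepsilon$.

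The main technical obstacle, and the reason the $\varepsilon$ in the exponent is unavoidable, is the control of discrete negative moments of $\hat v^+_{t_k}$ uniformly in $N$ when $\nu$ is close to $0$: Feller's estimates grant $\mathbb{E}[V_t^{-p}]<\infty$ only for $p<\nu$, and the inductive transfer to the Euler iterates forces an arbitrarily small loss in the admissible exponent that then propagates through the Gronwall step. Once these moment bounds are in place, the remaining error analysis is a standard continuous-time error expansion, and all contributions are shown to be of the same order $\min\{\nu/2,1/2\}-\varepsilon$.
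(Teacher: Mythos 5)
The paper does not prove this Proposition; it is stated as a consequence of the authors' earlier work \cite{MiNe:MiNe}, so there is no in-paper argument to compare against. On its own terms, however, your sketch has a genuine gap in the treatment of the martingale part of the $X$-error. After applying the It\^o isometry you arrive at a bound of the form
\begin{displaymath}
\mathbb{E}\big[|M_T|\big] \leq \left(\int_0^T \mathbb{E}\Big[\big(\sqrt{V_s}-\sqrt{\hat v^+_{\eta(s)}}\big)^2\Big]\,ds\right)^{1/2},
\end{displaymath}
and you then reduce the inner expectation via $|\sqrt a-\sqrt b|^2\leq|a-b|$ to $\mathbb{E}[|V_s-\hat v^+_{\eta(s)}|]$, which you control by the CIR $L^1$-rate $\min\{\nu/2,1/2\}-\varepsilon$. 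The outer square root then produces only the rate $\min\{\nu/4,1/4\}-\varepsilon/2$ for the martingale contribution, not the claimed $\min\{\nu/2,1/2\}-\varepsilon$; the drift part of your estimate does match the target, so the two pieces do not combine to the stated order.

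To reach the stated rate one must bound $\mathbb{E}[(\sqrt{V_s}-\sqrt{\hat v^+_{\eta(s)}})^2]$ directly at order $N^{-\min\{\nu,1\}+\varepsilon}$, rather than detour through the $L^1$- or $L^2$-error of $V-\hat v^+$: the elementary reductions $|\sqrt a-\sqrt b|^2\leq|a-b|$ or $\mathbb{E}[|V_s-\hat v^+_{\eta(s)}|]\leq(\mathbb{E}[|V_s-\hat v^+_{\eta(s)}|^2])^{1/2}$ all lose the same factor of two in the exponent. The natural route is to compare $\sqrt{V}$ and $\sqrt{\hat v^+}$ at the level of the Lamperti-transformed equation (Lemma \ref{MiNe:lamperti}), where $\sqrt{V}$ has additive noise $\tfrac{\sigma}{2}W$ and a drift involving $V^{-1/2}$, and to control the drift term through the negative-moment bounds of Lemma \ref{MiNe:bounded}(i) for exponents $p>-\nu$; this is what produces both the factor $\nu$ and the $\varepsilon$-loss. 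Your reduction to $|V_s-\hat v^+_{\eta(s)}|$ discards exactly this structure, and that is where the argument breaks down.
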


Thus for $\nu \geq 1$,  Euler-type schemes as \eqref{MiNe:Euler-Hes} obtain the optimal convergence order $\tfrac{1}{2}$, up to an arbitrarily small $\varepsilon >0$. We strongly suppose that the lower error bound in Theorem 1.1 is sharp, i.e., that we  have 
$$ 	\limsup_{N \rightarrow \infty} \,\,  N^{ \min \{\nu, \frac{1}{2}\}} \, e(N) < \infty. $$

The remainder of this note is structured as follows: In the next subsection we collect some remarks, while
Subsection \ref{MiNe:subsec:notation} summarizes some notations.
Section \ref{MiNe:sec:prem} contains then some preliminary results, while the proof of Theorem \ref{MiNe:thm:low-bound} is given in Section  \ref{MiNe:sec:proof}.

\subsection{Remarks}

\begin{remark}
	
	For the optimal approximation of the CIR process, i.e., for
	$$
	e_V(N)= \, \inf_{g \in \mathcal{G}(N)}	\, \mathbb{E}\left[  \big| g( W_{t_1},W_{t_2}, \ldots, W_{t_N}) -V_T \big|\right]
	$$ the best known upper bounds are as follows: For all $\varepsilon >0$ we have
	$$ \limsup_{N \rightarrow \infty } \, N^{\alpha(\nu,\varepsilon)} \, e_V(N) < \infty$$
	with
	$$ \alpha(\nu,\varepsilon)= \left \{ \begin{array}{ccl} 1 & \text{for}  & \nu >2, \\ \tfrac{1}{2}  & \text{for} & \nu \in (1,2] \cup   \{\tfrac{1}{2}\}, \\
		\min \{ \nu,\tfrac{1}{2}\} - \varepsilon &  \text{for}  & \nu \in (0,1]\setminus \{\tfrac{1}{2}\}, \end{array} \right. $$
	see \cite{MiNe:DNS,MiNe:AA2,MiNe:Mario,MiNe:HeHe}.
\end{remark}

\begin{remark} \label{MiNe:cc-tmg} 
	The pioneering work on optimal approximation of stochastic differential equations is \cite{MiNe:cam_clark}.
	Clark and Cameron studied in particular the optimal $L^2$-approximation  of 	
	\begin{equation*}  
		\begin{aligned}
			dX_t &=  V_tdB_t, \\
			dV_t &=  dW_t, 
		\end{aligned}\qquad t\in[0,1],
	\end{equation*} at the final time point by
	an equidistant discretization of the driving Brownian motion.
	Here the optimal method is given by
	$$ \mathbb{E} \big[X_1  \big | W_{\frac{1}{N}},...,W_1, B_{\frac{1}{N}},...,B_1 \big]
	$$
	and  one has
	$$  \left( \mathbb{E}\left[ \left| X_1 -\mathbb{E} \big[X_1  \big | W_{\frac{1}{N}},...,W_1, B_{\frac{1}{N}},...,B_1 \big] \right|^2\right] \right)^{1/2}  = \frac{1}{2} N^{-1/2}.
	$$
	
	Since then, for various error criteria a detailed and exhaustive study of the optimal approximation of general SDEs under standard assumptions has been carried out, even for adaptive discretizations, see, e.g., \cite{MiNe:Newton1986,MiNe:Newton1990,MiNe:Newton1991,MiNe:CastellGaines,MiNe:CambanisHu,MiNe:HMGR1,MiNe:HMGR2,MiNe:HMGR3,MiNe:HMGR4,MiNe:tmg-habil,MiNe:tmg2002,MiNe:tmg2004}
	and \cite{MiNe:survey} for a survey.
	In particular,	for the strong approximation under standard assumptions at the terminal time point, the results can be categorized along  the so-called commutativity condition, which we mentioned above for the log-Heston SDE. If the commutativity condition is satisfied, then the best possible convergence order is at least one, and if this condition fails, then the best possible convergence order is one half.
	
	A comparison with Theorem \ref{MiNe:thm:low-bound} illustrates now nicely the effect of non-standard coefficients, namely  a deterioration of the best possible convergence order for small Feller indices.
	
\end{remark}

\begin{remark}
	In Chapter 10 of \cite{MiNe:Mi-Diss} the optimal $L^2$-approximation of the log-asset component of the more general stochastic volatility model
	\begin{equation}  \label{MiNe:gen}
		\begin{aligned}
			dX_t &= \big(\mu-\tfrac{1}{2}f^2\left(V_t\right)\big) dt + f\left(V_t\right) \Big(\rho dW_t + \sqrt{1-\rho^2} dB_t\Big), \\
			dV_t &= b\left(V_t\right) dt + \sigma\left(V_t\right) dW_t,
		\end{aligned}\qquad t\in[0,T],
	\end{equation}
	by arbitrary methods that use  $N$ evaluations of each Brownian motion
	is analyzed. Here $V=(V_t)_{t\in [0,T]}$ takes values in an open set $D \subseteq \mathbb{R}$, $f,b,\sigma \colon D \rightarrow \mathbb{R}$  are appropriate functions, $\rho \in [-1,1]$, $ \mu \in \mathbb{R}$ and $W=(W_t)_{t \in [0,T]}$, $B=(B_t)_{t \in [0,T]}$ are independent Brownian motions.

	In particular, let
	$$ \Pi(N)= \left \{ (s_i,t_i)_{i=1}^N\colon \,   (s_i,t_i) \in [0,T]^2, i=1, \ldots, N, \, s_N=t_N=T \right\}$$
	and
	let $\mathcal{H}(N)$ be the set of all measurable functions from $\mathbb{R}^{2N}$ to $\mathbb{R}$. Then,
	Theorem 10.3 and Proposition 10.12 in \cite{MiNe:Mi-Diss} establish conditions on \eqref{MiNe:gen} such that for
	\begin{displaymath}
		e_{X,2}(N)\, = \, \inf_{ (s_i,t_i)_{i=1}^N  \in \Pi(N)} \, \, \inf_{h \in \mathcal{H}(N)}	\quad \left( \mathbb{E} \left[ \big| h( W_{s_1}, \ldots, W_{s_N}, B_{t_1}, \ldots, B_{t_N})-X_T \big|^2\right]\right)^{1/2}
	\end{displaymath}	
	we have
	$$ c(b,\sigma,f,\rho, T, v_0)\leq  
	\liminf_{N\rightarrow\infty} N^{1/2}    e_{X,2}(N) \leq 	\limsup_{N\rightarrow\infty} N^{1/2}  e_{X,2}(N) \leq  C(b,\sigma,f,\rho, T, v_0)
	$$	
	with
	$$ c(b,\sigma,f,\rho, T, v_0) = \sqrt{ \frac{1-\rho^2}{6}} \int_{0}^{T} \left( \mathbb{E}\left[(f'\sigma)^2(V_{t})\right]\right)^{1/2}dt$$
	and
	$$ C(b,\sigma,f,\rho, T, v_0) = \sqrt{ \frac{1-\rho^2}{4}} \int_{0}^{T} \left( \mathbb{E}\left[(f'\sigma)^2(V_{t})\right]\right)^{1/2}dt . $$ 
	For the log-Heston model, these conditions are fulfilled for $\nu >2$. In this case we also have
	$$ (f'\sigma)(v)= \frac{\sigma}{2}, \qquad v  >0, $$ and the optimal discretization is equidistant.

\end{remark}

\subsection{Notations}\label{MiNe:subsec:notation}
As already mentioned, we will work with an equidistant discretization
$	t_k = k \Delta t$, $k\in \{0, \ldots, N\},$
where $\Delta t= T/ N$ and $N \in \mathbb{N}$. Furthermore, we define $n(t)=\max\{k\in \{0,...,N\}\colon t_k\le t \} $ and $\eta(t)=t_{n(t)}.$ We also use the standard notation $\| \cdot \|_p$ for the $p$-norms on $\mathbb{R}^d$.
Constants whose values depend only on $T, x_0, v_0, \kappa, \theta, \sigma, \mu, \rho$  will be denoted in the following by $C$, regardless of their value. 
Moreover, the value of all these constants can change from line to line. The indicator function of a set $F$ will be denoted by $\chi_F$. Finally, we will work on  a  filtered probability space $(\Omega, \mathcal{F}, (\mathcal{F}_t)_{t\in[0,T]}, P)$, where the filtration satisfies the usual conditions, and (in-)equalities between random variables or random processes are understood $P$-a.s.~unless mentioned otherwise.

\section{Preliminary  results}\label{MiNe:sec:prem}
We collect here a couple of preliminary results.
The first one is a well-known result for the CIR process (see, e.g., Section 3 in \cite{MiNe:DNS} and Theorem 3.1 in \cite{MiNe:HK} for (i) and  Lemma 2.2 in \cite{MiNe:MiNe} for (ii)). 
\begin{lemma}\label{MiNe:bounded}
	(i) Let $p>-\nu$. Then we have
	\begin{displaymath}
		\sup\limits_{t\in[0,T]}\mathbb{E}\left[V^{p}_t\right]<\infty.
	\end{displaymath}
	(ii) Let $p \geq 1$. Then we have 
	\begin{displaymath}	\sup_{s,t \in[0,T]}\mathbb{E}\left[ \frac{\left|V_t-V_s\right|^p}{\left|t-s\right|^{p/2}} \right]
		<\infty.
	\end{displaymath}
\end{lemma}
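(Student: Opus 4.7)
My plan is to separate (i) into regimes for $p$ and then derive (ii) from (i) by standard estimates.

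For $p \ge 1$, It\^o's formula gives
\[ d(V_t^p) = \bigl(p\kappa\theta V_t^{p-1} - p\kappa V_t^p + \tfrac12 p(p-1)\sigma^2 V_t^{p-1}\bigr)\,dt + p\sigma V_t^{p-1/2}\,dW_t; \]
after a standard localization, taking expectations and using $V_s^{p-1} \le C(1+V_s^p)$ via Young's inequality produces an integral inequality in $\mathbb{E}[V_t^p]$ to which Gronwall applies, yielding $\sup_{t\in[0,T]}\mathbb{E}[V_t^p]<\infty$. The subcase $p\in[0,1)$ follows from Jensen applied to the case $p=1$, where $\mathbb{E}[V_t]=\theta+(v_0-\theta)e^{-\kappa t}$ is explicit. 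For the delicate regime $p\in(-\nu,0)$, It\^o is not directly available since $V_t^p$ blows up at zero; here I would invoke the classical fact that $V_t$ is, up to a deterministic positive rescaling, a non-central chi-squared random variable with $2\nu$ degrees of freedom and non-centrality parameter depending on $v_0$ and $t$. Its density behaves like $x^{\nu-1}$ near the origin, so $\int_0^\infty x^p\rho_t(x)\,dx$ converges iff $p>-\nu$, and inspection of the $t$-dependence of the constants in the density yields uniformity in $t\in(0,T]$.

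For (ii) I would start from
\[ V_t - V_s = \int_s^t \kappa(\theta - V_u)\,du + \sigma\int_s^t \sqrt{V_u}\,dW_u, \]
and combine the $c_r$-inequality with Jensen on the drift and the Burkholder--Davis--Gundy inequality on the stochastic integral. The drift part is at most $C|t-s|^{p-1}\int_s^t(1+\mathbb{E}[V_u^p])\,du \le C|t-s|^p$, and BDG followed by Jensen (in either direction, depending on whether $p/2\ge 1$) gives
\[ \mathbb{E}\!\left[\Bigl|\int_s^t \sigma\sqrt{V_u}\,dW_u\Bigr|^p\right] \le C\,\mathbb{E}\!\left[\Bigl(\int_s^t V_u\,du\Bigr)^{p/2}\right] \le C|t-s|^{p/2}, \]
where (i) with the non-negative exponents $p$ and $p/2$ supplies the required uniform moment bounds. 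Since $|t-s|\le T$ we may absorb $|t-s|^p$ into $T^{p/2}|t-s|^{p/2}$, which yields the claim.

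The main obstacle is the negative-moment estimate in (i): the sharp threshold $-\nu$ cannot be reached through any It\^o-type calculation that merely avoids zero, and one truly needs the explicit non-central chi-squared density to control the behaviour at the origin. Everything else is a routine application of It\^o, BDG, Jensen, Young and Gronwall, with (ii) using (i) essentially as a black box.
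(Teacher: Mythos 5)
The paper itself does not prove Lemma~\ref{MiNe:bounded}; it simply cites it as a standard fact, pointing to Section~3 of \cite{MiNe:DNS} and Theorem~3.1 of \cite{MiNe:HK} for part~(i), and to Lemma~2.2 of \cite{MiNe:MiNe} for part~(ii). Your proof sketch is correct, and it follows the same line as those references: positive moments via It\^o's formula, Young's inequality and Gronwall; the range $p\in[0,1)$ by Jensen from $p=1$; the negative range $p\in(-\nu,0)$ from the explicit non-central chi-squared law of $V_t$ (equivalently, from explicit Laplace transforms as in \cite{MiNe:HK}), whose density has a prefactor behaving like $x^{\nu-1}$ at the origin and so admits $p$th moments exactly for $p>-\nu$; and part~(ii) by the SDE decomposition, the $c_r$-inequality, Jensen on the drift and Burkholder--Davis--Gundy plus Jensen on the stochastic integral, with part~(i) supplying the uniform moment bounds. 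Your remark that a purely local It\^o/Lyapunov argument does not reach the sharp threshold $-\nu$ is accurate: applying It\^o to $v\mapsto v^{p}$ directly yields a favourable drift sign only for $p>1-\nu$, so the explicit transition law (or the Laplace-transform computation) is genuinely needed to cover $p\in(-\nu,1-\nu]$. The only place your argument is slightly compressed is the uniformity in $t$ of the negative-moment bound; there one has to note that as $t\to 0$ the Gaussian-type factor $e^{-c_t v_0 e^{-\kappa t}}$ with $c_t=2\kappa/(\sigma^{2}(1-e^{-\kappa t}))$ decays faster than the algebraic growth of the normalization, while $t=0$ is trivial since $V_0=v_0>0$ is deterministic; this is exactly what ``inspection of the $t$-dependence'' has to deliver, and it does.
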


In the case $\nu \geq 1$, the CIR process takes values in $(0,\infty)$ and an application of It\=o's lemma gives for $U_t =\sqrt{V_t}$ the SDE
\begin{align*}
	U_t = \sqrt{v_0} + \int_{0}^{t}\left(\left(\frac{\kappa\theta}{2}-\frac{\sigma^2}{8}\right)\frac{1}{U_s}-\frac{\kappa}{2}U_s\right)ds +\frac{\sigma}{2}W_t, \qquad t \in [0,T].
\end{align*} 
The next lemma extends this result to the case $\nu \in (1/2,1)$, in which the CIR process also hits zero and takes values in $[0, \infty)$.
\begin{lemma}\label{MiNe:lamperti}
	The process $\left(U_t\right)_{t\in[0,T]}$ defined by $U_t =\sqrt{V_t}$, $t \in [0,T]$, is adapted and has continuous sample paths. Furthermore, if  $\nu>1/2$, it satisfies 
	\begin{align*}
		U_t = \sqrt{v_0} + \int_{0}^{t}\left(\left(\frac{\kappa\theta}{2}-\frac{\sigma^2}{8}\right)\frac{1}{U_s}-\frac{\kappa}{2}U_s\right)\chi_{\{U_s\in(0,\infty)\}}ds +\frac{\sigma}{2}W_t
	\end{align*}
	for all $t \in [0,T]$.
	\begin{proof} {} \, 
		This follows from Lemma 3.2 of \cite{MiNe:HuJeNo}. 
	\end{proof}
\end{lemma}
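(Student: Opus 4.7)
The plan is to prove the two assertions separately. Adaptedness and sample path continuity of $U_t=\sqrt{V_t}$ are immediate: $V$ is adapted and continuous with values in $[0,\infty)$, and $v\mapsto \sqrt{v}$ is continuous on $[0,\infty)$, so the pre-composition with the canonical process inherits both properties. This part requires no assumption on $\nu$.

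For the Lamperti-type representation of $U$ when $\nu>1/2$, I would use a regularization argument, since $\sqrt{\cdot}$ fails to be $C^{2}$ at zero and Itô's formula cannot be applied directly. The natural smoothing is $\phi_{\e}(v)=\sqrt{v+\e}$ for $\e>0$, which is $C^{\8}$ on $[0,\8)$ with $\phi_{\e}'(v)=\frac{1}{2\sqrt{v+\e}}$ and $\phi_{\e}''(v)=-\frac{1}{4(v+\e)^{3/2}}$. Applying Itô's formula to $\phi_{\e}(V_t)$ via the second equation in \eqref{MiNe:hes-log} yields
\begin{align*}
\sqrt{V_t+\e} \,=\, \sqrt{v_0+\e} \,+\, \int_{0}^{t}\frac{\k(\o-V_s)}{2\sqrt{V_s+\e}}\,ds \,-\, \int_{0}^{t}\frac{\s^{2}V_s}{8(V_s+\e)^{3/2}}\,ds \,+\, \int_{0}^{t}\frac{\s\sqrt{V_s}}{2\sqrt{V_s+\e}}\,dW_s.
\end{align*}
I would then let $\e\downarrow 0$ term by term. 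The left-hand side converges pointwise to $U_t=\sqrt{V_t}$ by continuity of the square root.

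The crucial input for the limit is that $\{s\in[0,T]\colon V_s=0\}$ has Lebesgue measure zero almost surely, which follows (after taking expectations via Fubini) from the fact that $V_s$ has a density for every $s>0$, so $\PP(V_s=0)=0$ and hence $\E\bigl[\int_0^T \chi_{\{V_s=0\}}\,ds\bigr]=0$. On the complementary set $\{V_s>0\}$, the drift integrands converge pointwise to $\frac{\k(\o-V_s)}{2\sqrt{V_s}}-\frac{\s^{2}}{8\sqrt{V_s}}=\bigl(\frac{\k\o}{2}-\frac{\s^{2}}{8}\bigr)\frac{1}{U_s}-\frac{\k}{2}U_s$. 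To push the $\e\downarrow 0$ limit through the Lebesgue integral I would invoke dominated convergence using the uniform bound $\frac{1}{\sqrt{V_s+\e}}\le \frac{1}{\sqrt{V_s}}\chi_{\{V_s>0\}}+\e^{-1/2}\chi_{\{V_s=0\}}$ together with Lemma \ref{MiNe:bounded}(i) applied with $p=-1/2$: since $\n>1/2$ we have $p>-\n$, so $\sup_{s\in[0,T]}\E[V_s^{-1/2}]<\8$, making $\frac{1}{U_s}\chi_{\{U_s>0\}}$ integrable on $[0,T]\K\W$. The stochastic integral $\int_0^t \frac{\s\sqrt{V_s}}{2\sqrt{V_s+\e}}\,dW_s$ converges in $L^{2}$ to $\frac{\s}{2}W_t$ by Itô's isometry, because the integrand is bounded by $\s/2$ and converges pointwise to $\frac{\s}{2}\chi_{\{V_s>0\}}$, which differs from $\s/2$ only on a Lebesgue-null set of $s$, almost surely.

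I expect the main technical hurdle to be precisely the control of the singular drift $\frac{1}{\sqrt{V_s}}$ in $L^{1}$, which is exactly where the assumption $\n>1/2$ enters through Lemma \ref{MiNe:bounded}(i); without it, the regularized drift would blow up in the limit and the Lamperti representation would not survive the passage $\e\downarrow 0$. Once the four limits are identified, I can pass to a subsequence that converges almost surely simultaneously in all terms (or argue convergence in probability throughout), identify the limit pointwise, and conclude the stated equation, which holds for all $t\in[0,T]$ by continuity of both sides. Throughout one should note that the indicator $\chi_{\{U_s\in(0,\8)\}}$ in the statement is cosmetic in view of the measure-zero property of the zero set of $V$, but it makes the integrand well defined as a pointwise expression on all of $\W\K[0,T]$.
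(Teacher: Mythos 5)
Your argument is correct and gives a self-contained proof where the paper simply defers to Lemma 3.2 of Hutzenthaler--Jentzen--Noll \cite{MiNe:HuJeNo}. The regularization $\phi_{\varepsilon}(v)=\sqrt{v+\varepsilon}$, It\=o, and passage to the limit $\varepsilon\downarrow 0$ is the standard elementary route to such a Lamperti-type representation when the boundary is accessible, and you have correctly isolated the two places where $\nu>1/2$ enters: Lemma \ref{MiNe:bounded}(i) with $p=-1/2>-\nu$ makes the dominating function $V_s^{-1/2}\chi_{\{V_s>0\}}$ integrable over $[0,T]\times\Omega$, and the absolute continuity of the law of $V_s$ for $s>0$ together with Fubini reduces the Lebesgue integral to the event $\{V_s>0\}$. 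What your version buys is transparency about exactly which moment bound is needed; what the citation buys is brevity and a treatment that covers the entire accessible-boundary range in one place.

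Two small points of bookkeeping. First, you only exhibit a dominating function for the $\kappa(\theta-V_s)/(2\sqrt{V_s+\varepsilon})$ term; the second-order term needs the companion bound $V_s/(V_s+\varepsilon)^{3/2}\le 1/\sqrt{V_s}$ on $\{V_s>0\}$, which holds for the same reason and lets you reuse the same dominating function. Second, your displayed inequality $\frac{1}{\sqrt{V_s+\varepsilon}}\le \frac{1}{\sqrt{V_s}}\chi_{\{V_s>0\}}+\varepsilon^{-1/2}\chi_{\{V_s=0\}}$ is not $\varepsilon$-uniform on $\{V_s=0\}$ and so cannot literally serve as the dominating function in the dominated convergence theorem; what rescues the argument, as you correctly note a line later, is that $\{s\in[0,T]:V_s=0\}$ is Lebesgue-null almost surely, so the $\varepsilon^{-1/2}$ summand can be discarded from the integrand before dominated convergence is applied. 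With these two glosses the limiting argument is complete, and the almost-sure identification along a subsequence followed by continuity in $t$ finishes the proof.
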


In the following, we set
\begin{equation*}  A_t= \int_{0}^{t}\left(\left(\frac{\kappa\theta}{2}-\frac{\sigma^2}{8}\right)\frac{1}{U_s}-\frac{\kappa}{2}U_s\right)\chi_{\{U_s\in(0,\infty)\}}ds, \qquad t \in [0,T].\end{equation*}

\begin{lemma}\label{MiNe:Lp}  Let $\nu > 1/2$, $p \geq 2$.
	It holds that
	\begin{align*}
		\sup_{s,t \in[0,T]}\mathbb{E}\left[ \frac{\left|\sqrt{V_t}-\sqrt{V_s}\right|^p}{\left|t-s\right|^{p/4}} \right]
		+
		\sup_{s,t \in[0,T]}\mathbb{E}\left[ \frac{\left|A_t-A_s\right|^p}{\left|t-s\right|^{p/4}} \right]<\infty.
	\end{align*}
	\begin{proof} {} \, 
		First note that Lemma  \ref{MiNe:bounded} (ii) and $|\sqrt{V_t}-\sqrt{V_s}| \leq \sqrt{|V_t-V_s|}$ imply that
		\begin{displaymath}	\sup_{s,t \in[0,T]}\mathbb{E}\left[ \frac{\left|\sqrt{V_t}-\sqrt{V_s}\right|^p}{\left|t-s\right|^{p/4}} \right] \leq \sup_{s,t \in[0,T]}\mathbb{E}\left[ \frac{\left|V_t-V_s\right|^{p/2}}{\left|t-s\right|^{p/4}} \right] < \infty.
		\end{displaymath}
		The second estimate follows then from $A_t=\sqrt{V_t}-\sqrt{v_0}-\frac{\sigma}{2} W_t$, $t \in [0,T]$,
		$$ 	\sup_{s,t \in[0,T]} \mathbb{E}\left[ \frac{\left|W_t-W_s\right|^p}{|t-s|^{p/2}}  \right]  < \infty $$
		and the Minkowski inequality.
	\end{proof}
\end{lemma}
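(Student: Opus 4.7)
The plan is to handle the two supremum terms separately, reducing both to Lemma \ref{MiNe:bounded}(ii) via elementary manipulations. For the first term, I would invoke the elementary inequality $|\sqrt{a}-\sqrt{b}|\le\sqrt{|a-b|}$ valid for $a,b\ge 0$, raised to the $p$-th power, to get $|\sqrt{V_t}-\sqrt{V_s}|^p\le |V_t-V_s|^{p/2}$. Since $p\ge 2$, the exponent $p/2\ge 1$, so Lemma \ref{MiNe:bounded}(ii) (applied with $p/2$ in place of $p$) yields $\mathbb{E}[|V_t-V_s|^{p/2}]\le C|t-s|^{p/4}$ uniformly in $s,t\in[0,T]$, which is exactly the $|t-s|^{p/4}$ rate required.

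For the second term, the strategy is to use Lemma \ref{MiNe:lamperti} to rewrite $A_t$ in a closed form. Rearranging the SDE for $U_t=\sqrt{V_t}$ given there, we have $A_t=\sqrt{V_t}-\sqrt{v_0}-\tfrac{\sigma}{2}W_t$, and hence
\begin{equation*}
A_t-A_s=\bigl(\sqrt{V_t}-\sqrt{V_s}\bigr)-\tfrac{\sigma}{2}(W_t-W_s).
\end{equation*}
Applying Minkowski's inequality in $L^p$ then reduces the bound to the estimate for $\sqrt{V_t}-\sqrt{V_s}$ just obtained, together with the standard Gaussian moment bound $\mathbb{E}|W_t-W_s|^p\le C_p|t-s|^{p/2}$. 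Since $|t-s|^{p/2}\le T^{p/4}|t-s|^{p/4}$ on $[0,T]$, the Brownian contribution is even smoother than needed, so combining both pieces gives the claimed $|t-s|^{p/4}$ rate uniformly in $s,t$.

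The argument is essentially routine once Lemma \ref{MiNe:lamperti} is in place; I do not anticipate any real analytical obstacle. The only mildly delicate point is recognizing that the hypothesis $\nu>1/2$ of the present lemma is used exclusively to make Lemma \ref{MiNe:lamperti} available (the first bound on $|\sqrt{V_t}-\sqrt{V_s}|$ would in fact hold for any $\nu>0$), and that the indicator $\chi_{\{U_s\in(0,\infty)\}}$ appearing in the drift of $U$ in Lemma \ref{MiNe:lamperti} is harmless for rearranging into $A_t=\sqrt{V_t}-\sqrt{v_0}-\tfrac{\sigma}{2}W_t$, because the definition of $A_t$ carries the same indicator.
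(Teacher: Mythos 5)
Your proof is correct and matches the paper's argument: the same square-root inequality combined with Lemma \ref{MiNe:bounded}(ii) for the first term, and the rearranged Lamperti representation $A_t=\sqrt{V_t}-\sqrt{v_0}-\tfrac{\sigma}{2}W_t$ together with Minkowski and the Gaussian increment bound for the second. Your remark that $\nu>1/2$ enters only via Lemma \ref{MiNe:lamperti} is a correct additional observation.
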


\medskip

As already mentioned, the question of the best possible $L^1$-approximation of the CIR process at the final time point has been answered by the works of Hefter, Herzwurm, Jentzen and M\"uller-Gronbach:
\begin{theorem}[Corollary 14 in \cite{MiNe:HeHeMG} and Theorem 1 in \cite{MiNe:JentzenHefter}]
	It holds that
	\begin{displaymath}
		\liminf_{N \rightarrow \infty} \, N^{\min \{\nu,1 \}} \, \inf_{g \in \mathcal{G}(N)}	\, \mathbb{E}\left[  \big| g( W_{t_1},W_{t_2}, \ldots, W_{t_N}) -V_T \big|\right]  >0,
	\end{displaymath}
	where $\mathcal{G}(N)$ is the set of measurable functions $g\colon \mathbb{R}^N \rightarrow \mathbb{R}$.
\end{theorem}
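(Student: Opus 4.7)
The plan is to reduce the infimum to a two-point distance by a symmetrisation argument, and then to lower bound that distance separately in the regimes $\nu\ge 1$ and $\nu\in(0,1)$.

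First I would construct an auxiliary Brownian motion $\widetilde W$ on the same probability space that coincides with $W$ at the grid points, $\widetilde W_{t_k}=W_{t_k}$ for all $k$, but whose Brownian bridges on each subinterval $[t_k,t_{k+1}]$ are independent of those of $W$. Concretely, write $W=\bar W+Z$ with $\bar W$ the piecewise-linear interpolation through the grid values and $Z$ the bridge residual, take an independent copy $\widetilde Z$ of $Z$, and set $\widetilde W=\bar W+\widetilde Z$. Let $\widetilde V$ be the CIR process driven by $\widetilde W$ and started at $v_0$. Since $\widetilde W$ is itself a Brownian motion, the joint laws of $(W_{t_1},\ldots,W_{t_N},V_T)$ and of $(\widetilde W_{t_1},\ldots,\widetilde W_{t_N},\widetilde V_T)$ coincide, so for every $g\in\mathcal G(N)$
\begin{displaymath}
\E\big[|g(W_{t_1},\ldots,W_{t_N})-V_T|\big]=\E\big[|g(\widetilde W_{t_1},\ldots,\widetilde W_{t_N})-\widetilde V_T|\big].
\end{displaymath}
Combining this with the triangle inequality and $\widetilde W_{t_k}=W_{t_k}$ yields
\begin{displaymath}
2\,\inf_{g\in\mathcal G(N)} \E\big[|g(W_{t_1},\ldots,W_{t_N})-V_T|\big] \;\ge\; \E\big[|V_T-\widetilde V_T|\big],
\end{displaymath}
so it is enough to show $\E[|V_T-\widetilde V_T|]\ge c\,N^{-\min\{\nu,1\}}$ for some $c>0$.

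In the regime $\nu\ge 1$, the CIR process does not hit zero almost surely, so by Lemma \ref{MiNe:lamperti} the Lamperti transforms $U=\sqrt V$ and $\widetilde U=\sqrt{\widetilde V}$ solve an SDE with drift finite along the trajectory. Their difference $D_t:=U_t-\widetilde U_t$ then satisfies the linear SDE
\begin{displaymath}
dD_t = -\alpha_t D_t\,dt + \tfrac{\sigma}{2}\,d(W_t-\widetilde W_t),\qquad \alpha_t := \tfrac{\kappa\theta/2-\sigma^2/8}{U_t\widetilde U_t}+\tfrac{\kappa}{2},
\end{displaymath}
with $\alpha$ almost surely bounded in time and $D_0=0$. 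Variation of constants expresses $D_T$ as an exponentially-weighted stochastic integral against the bridge difference $Z-\widetilde Z$; an It\^o-isometry-type computation, using the moment bounds of Lemma \ref{MiNe:bounded}, yields $\E[D_T^2]\ge c_2/N^2$, and combining this with higher moment bounds for $D_T$ (again from Lemma \ref{MiNe:bounded}) via a Paley--Zygmund inequality gives $\E[|D_T|]\ge c_1/N$, hence $\E[|V_T-\widetilde V_T|]\ge c/N$.

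The case $\nu\in(0,1)$ is the main obstacle. Here $V$ hits zero with positive probability, the Lamperti drift $(\kappa\theta/2-\sigma^2/8)/U$ is singular at the boundary, and the linearisation above breaks down near the zeros of $U$. The approach in the cited works is to invoke the squared-Bessel / scaling description of the CIR process near $0$ and to show that on a subinterval $[t_k,t_{k+1}]$ during which either process visits a small neighbourhood of the boundary, the independent bridges $Z,\widetilde Z$ produce a pathwise mismatch in $V-\widetilde V$ of size $(\Delta t)^\nu$ rather than $\Delta t$; summed over the $\Theta(N)$ such subintervals this gives the $N^{-\nu}$ bound. Making this excursion-level coupling rigorous --- precisely where the threshold $\nu<1$ enters --- is the technical heart of \cite{MiNe:HeHeMG,MiNe:JentzenHefter}.
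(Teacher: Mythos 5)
The paper itself does not prove this statement at all: it is imported as a black box from Corollary~14 of \cite{MiNe:HeHeMG} and Theorem~1 of \cite{MiNe:JentzenHefter}, and then combined with Theorem~\ref{MiNe:thm:low-bound2} to obtain Theorem~\ref{MiNe:thm:low-bound}. So there is no in-paper proof to compare against; what you have written is a reconstruction of (part of) the external argument.

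Your symmetrization reduction is sound, and it is in fact the same device the authors use in Subsection~\ref{MiNe:BBS} for the $X$-component: replace the bridge residual of $W$ by an independent copy $\widetilde Z$, note that $(\widetilde W,\widetilde V)\stackrel{d}{=}(W,V)$ while $\widetilde W$ and $W$ agree at the grid, and conclude by the triangle inequality that the minimal error is at least $\tfrac12\,\mathbb{E}[|V_T-\widetilde V_T|]$. This coupling idea is also the backbone of \cite{MiNe:HeHeMG}. As a proof, though, the proposal has real gaps. For $\nu\ge 1$, the coefficient $\alpha_t$ in your linearised SDE is pathwise finite but not uniformly bounded (both $U$ and $\widetilde U$ can get arbitrarily close to zero), so the claimed It\^o-isometry lower bound $\mathbb{E}[D_T^2]\gtrsim N^{-2}$ is not immediate; the passage from a second-moment lower bound to $\mathbb{E}[|D_T|]\gtrsim N^{-1}$ requires a matching fourth-moment upper bound of order $N^{-4}$, which you assert but do not establish; and converting $\mathbb{E}[|D_T|]$ into $\mathbb{E}[|V_T-\widetilde V_T|]=\mathbb{E}[|D_T|\,(U_T+\widetilde U_T)]$ needs control of the random factor $U_T+\widetilde U_T$ on the event where $|D_T|$ is not small, which is again nontrivial. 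For $\nu\in(0,1)$ you explicitly defer the substance of the argument back to \cite{MiNe:HeHeMG,MiNe:JentzenHefter}, so that half of the claim is not proved at all. In short, the outline correctly identifies the coupling strategy of the cited works, but it is a road-map rather than a self-contained proof, while for the purposes of this paper the theorem is simply a citation.
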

Since $V$ does not depend on $B$, we have
\begin{align*} & \inf_{g \in \mathcal{G}(N)}	\, \mathbb{E}\left[  \big| g( W_{t_1},W_{t_2}, \ldots, W_{t_N}) -V_T \big|\right] \\ & \qquad  \quad  = \inf_{h \in \mathcal{H}(N)}	\, \mathbb{E} \left[ \big| h( W_{t_1},W_{t_2}, \ldots, W_{t_N}, B_{t_1},B_{t_2}, \ldots, B_{t_N})-V_T \big|\right], \end{align*}
where  
$\mathcal{H}(N)$ is the set of measurable functions  $h \colon \mathbb{R}^{2N} \rightarrow \mathbb{R}$. Therefore our main result follows, if we can show that
\begin{align} \label{MiNe:low-bound:X:prelim}
	\liminf_{N \rightarrow \infty} \,\,  N^{1/2}\inf_{h \in \mathcal{H}(N)}	\, \mathbb{E} \left[ \big| h( W_{t_1},W_{t_2}, \ldots, W_{t_N}, B_{t_1},B_{t_2}, \ldots, B_{t_N})-X_T \big|\right] >0
\end{align}
for  $\nu > \frac{1}{2}$ and $|\rho| \neq 1$.

\medskip

Our proof of \eqref{MiNe:low-bound:X:prelim} will rely on a symmetrization argument. For this we will need an auxiliary result for the quantity
\begin{displaymath} 
	\mathcal{I}(B^{\circ},W) = \int_0^T B_t dW_t - \int_0^T \overline{B}_t dW_t . 
\end{displaymath}
Here, $\overline{B}$ denotes the piecewise linear interpolation of $B$ on the grid $t_0,...,t_N$, i.e., $\overline{B}$ is defined as
\begin{displaymath}
	\overline{B}_t=B_{t_k} + \frac{t-t_k}{t_{k+1}-t_k} (B_{t_{k+1}} -B_{t_k}), \qquad  t \in [t_k,t_{k+1}], \, \,k \in \{ 0, \ldots ,N-1 \},
\end{displaymath}
and we also define
$$ B_t^{\circ}=B_t-\overline{B}_t, \qquad t \in [0,T].$$

\begin{lemma} \label{MiNe:lem-BBdW}
	(i)  Let $n \in \mathbb{N}$,
	\begin{displaymath}
		\tau_{\ell,n}=\frac{\ell}{2^n} \frac{T}{N}, \qquad   \ell \in \{ 0, \ldots, 2^n \}, 
	\end{displaymath}
	and
	\begin{displaymath}
		\mathcal{I}^n(B^{\circ},W) = \sum_{k=0}^{N-1} \sum_{\ell=0}^{2^n-1} B^{\circ}_{t_k+ \tau_{\ell,n} } \left (W_{  t_k + \tau_{\ell+1,n}} - W_{ t_k+ \tau_{\ell,n}} \right ) .
	\end{displaymath} 
	We have that
	\begin{displaymath}
		\mathcal{I}(B^{\circ},W) = \lim_{n \rightarrow \infty}   \mathcal{I}^n(B^{\circ},W)
	\end{displaymath}
	almost surely and in $L^2$. \\	
	(ii)  It holds
	\begin{displaymath}
		\mathcal{I}(B^{\circ},W) \stackrel{d}{=} W_1 \left(\int_0^T |B_t^{\circ}|^2 dt \right)^{1/2}.
	\end{displaymath}
\end{lemma}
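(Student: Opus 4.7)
For part (i) the key idea is to condition on $\mathcal{F}^B := \sigma(B_t \colon t \in [0,T])$. By independence, $W$ remains a Brownian motion under this conditioning, while $B^{\circ}$ becomes a deterministic continuous function on $[0,T]$. The sum $\mathcal{I}^n(B^{\circ},W)$ is then the Wiener integral $\int_0^T g_n(t)\, dW_t$ of the deterministic step function $g_n$ that equals $B^{\circ}_{t_k + \tau_{\ell,n}}$ on $[t_k+\tau_{\ell,n},\, t_k+\tau_{\ell+1,n})$. Uniform continuity of $B^{\circ}$ gives $g_n \to B^{\circ}$ uniformly on $[0,T]$ almost surely, and $\sup_n \|g_n\|_\infty \le 2 \sup_{t\in[0,T]}|B_t|$ has moments of all orders; the conditional It\^o isometry
\[ \mathbb{E}\bigl[|\mathcal{I}^n - \mathcal{I}^m|^2 \,\big|\, \mathcal{F}^B\bigr] = \int_0^T (g_n(t) - g_m(t))^2\, dt \]
together with dominated convergence then shows that $(\mathcal{I}^n)_n$ is Cauchy in $L^2(\mathbb{P})$; denote its limit by $J$. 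The $L^2$-convergence can be upgraded to almost sure convergence by a Borel--Cantelli argument based on the polynomial decay, as $n \to \infty$, of $\mathbb{E}\int_0^T (g_n - B^{\circ})^2\, dt$, which in turn follows from the H\"older continuity of Brownian paths. Finally, $J$ is identified with $\int_0^T B_t\, dW_t - \int_0^T \overline{B}_t\, dW_t$ by linearity: the Riemann sums built from $B$ converge to the It\^o integral, while those built from $\overline{B}$ converge to $\int_0^T \overline{B}_t\, dW_t$, which on each subinterval $[t_k, t_{k+1}]$ is well-defined pathwise via integration by parts, because $\overline{B}$ is linear there.

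For part (ii), once (i) is in place, the statement is a direct consequence of conditional Gaussianity. Given $\mathcal{F}^B$, the random variable $\mathcal{I}(B^{\circ},W)$ is an $L^2$-limit of Wiener integrals of deterministic step functions, hence Gaussian with mean $0$ and variance $\int_0^T (B^{\circ}_t)^2\, dt$, so
\[ \mathbb{E}\bigl[e^{i\lambda \mathcal{I}(B^{\circ},W)} \,\big|\, \mathcal{F}^B\bigr] = \exp\Bigl(-\tfrac{\lambda^2}{2}\int_0^T (B^{\circ}_t)^2\, dt\Bigr). \]
For any standard normal $Z$ independent of $\mathcal{F}^B$, the product $Z \cdot (\int_0^T (B^{\circ}_t)^2\, dt)^{1/2}$ has the same conditional characteristic function, so the two random variables agree in law; identifying $Z$ with the value at time $1$ of a Brownian motion independent of $B$ yields the claim.

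The main obstacle I anticipate is the lack of adaptedness of $\overline{B}$ to the joint filtration of $(W,B)$: on $[t_k, t_{k+1}]$, the value $\overline{B}_t$ already involves the future endpoint $B_{t_{k+1}}$, so the It\^o isometry cannot be applied to $\int_0^T \overline{B}_t\, dW_t$ directly. The detour through conditioning on $\mathcal{F}^B$ reduces matters to a Wiener integral against $W$ with a deterministic integrand, after which the argument is standard; the remaining delicate point is the pathwise identification of $J$ with the signed difference of the two integrals in the original definition of $\mathcal{I}(B^{\circ}, W)$, which is taken care of by the integration by parts mentioned above.
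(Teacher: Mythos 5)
Your proposal is correct and follows essentially the same route as the paper: both exploit the independence of $W$ and $B^{\circ}$ to reduce to conditional Gaussianity, approximating by the discrete sums $\mathcal{I}^n$ and passing to the limit via the conditional It\^o isometry and Borel--Cantelli. The only cosmetic difference is that you condition on the whole $\sigma$-algebra $\mathcal{F}^B$ and match conditional characteristic functions, whereas the paper conditions on the finitely many bridge values $B^{\circ}_{t_k+\tau_{\ell,n}}$ and passes to the distributional limit; both are standard ways of packaging the same conditional-Gaussian identity.
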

\begin{proof}
	
	(i) 	By straightforward calculations using $B^{\circ}=B-\overline{B}$, the independence of $B$ and $W$ and the It\=o isometry we have
	\begin{displaymath}
		\mathbb{E} \left[ \left| \mathcal{I}(B^{\circ},W) - \mathcal{I}^n(B^{\circ},W)  \right|^2\right]  \leq C  \cdot N^{-1 } \cdot 2^{-n},
	\end{displaymath}
	which yields the claimed $L^2$-convergence for $n \rightarrow \infty$, and
	also implies
	\begin{displaymath}
		\sum_{n=1}^{\infty} 
		\mathbb{E}\left[  \left| \mathcal{I}(B^{\circ},W) - \mathcal{I}^n(B^{\circ},W)  \right|\right]  < \infty,
	\end{displaymath}
	from which the almost sure convergence follows by an application of the Borel-Cantelli lemma.
	
	(ii)  Recall that  $W$ is independent of $B$ and therefore also of $B^{\circ}$. The conditional law of $\mathcal{I}^n(B^{\circ},W)$ given 
	\begin{displaymath}
		B^{\circ}_{t_k+ \tau_{\ell,n}}=x_{k,\ell}, \quad   \ell \in \{0, \ldots, 2^n-1\}, k \in \{ 0, \ldots, N-1 \} ,
	\end{displaymath}
	is therefore Gaussian with zero mean and variance $  \sum_{k=0}^{N-1} \sum_{\ell=0}^{2^n-1} |x_{k, \ell}|^2 (\tau_{\ell+1,n} -\tau_{\ell,n}) $. We thus have
	\begin{displaymath}
		\mathcal{I}^n(B^{\circ},W) \stackrel{d}{=} W_1  \left( \sum_{k=0}^{N-1} \sum_{\ell=0}^{2^n-1} |B^{\circ}_{t_k+ \tau_{\ell,n}}|^2  \left(  \tau_{\ell+1,n} -\tau_{\ell,n}\right) \right)^{1/2}.
	\end{displaymath}
	Since also
	\begin{displaymath}
		\int_0^T |B^{\circ}_{t}|^2dt = \lim_{n \rightarrow \infty} \sum_{k=0}^{N-1} \sum_{\ell=0}^{2^n-1} |B^{\circ}_{t_k+ \tau_{\ell,n}}|^2  \left(  \tau_{\ell+1,n} -\tau_{\ell,n}\right)
	\end{displaymath}
	almost surely (by continuity of almost all sample paths of $B^{\circ}$), the assertion follows now from part (i).
\end{proof}

\section{Proof of Theorem \ref{MiNe:thm:low-bound}}\label{MiNe:sec:proof}

As already pointed out, for the proof of Theorem \ref{MiNe:thm:low-bound} it remains to show the following result:

\begin{theorem} \label{MiNe:thm:low-bound2}
	Let $\nu > \frac{1}{2}$, $|\rho| \neq 1$ and
	$\mathcal{H}(N)$ be the set of measurable functions  $h \colon \mathbb{R}^{2N} \rightarrow \mathbb{R}$. Then, we have that
	\begin{displaymath}
		\liminf_{N \rightarrow \infty} \, \,  N^{1/2}\inf_{h \in \mathcal{H}(N)}	\, \mathbb{E} \left[ \big| h( W_{t_1},W_{t_2}, \ldots, W_{t_N}, B_{t_1},B_{t_2}, \ldots, B_{t_N})-X_T \big|\right] \ge  c
	\end{displaymath}
	with	\begin{displaymath}
		c= \frac{\sigma T}{8}  \sqrt{1-\rho^2} .
	\end{displaymath}
\end{theorem}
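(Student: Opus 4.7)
The plan is to establish the lower bound via a symmetrization argument. First I would reflect $B$ about its piecewise linear interpolation $\overline B$ (notation as in Lemma~\ref{MiNe:lem-BBdW}), setting $\tilde B := 2\overline B - B$; because Brownian bridges are distributionally symmetric about their chord, $\tilde B$ has the same joint law as $B$, agrees with $B$ at the grid points, and remains independent of $W$. Running the log-Heston SDE with $\tilde B$ in place of $B$ (and the same $W$) produces $\tilde X_T$ with the same conditional law as $X_T$ given $(W_{t_k}, B_{t_k})_k$, so that $\mathbb{E}|h - X_T| \geq \tfrac{1}{2}\mathbb{E}|X_T - \tilde X_T|$ for every $h \in \mathcal{H}(N)$. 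The task thus reduces to proving $\liminf_N \sqrt N\,\mathbb{E}|X_T - \tilde X_T| \geq 2c$. Since only the $B$-dependent part of the SDE is affected and the $\sqrt{1-\rho^2}\sqrt{v_0}B_T$ contribution cancels (because $\overline B_T = B_T$), a direct computation using $\int_{t_k}^{t_{k+1}} U_s\,d\overline B_s = \bar U_k (B_{t_{k+1}} - B_{t_k})$, with $U_s = \sqrt{V_s}$ and $\bar U_k = \tfrac{1}{\Delta t}\int_{t_k}^{t_{k+1}} U_s\,ds$, yields
\[
X_T - \tilde X_T = 2\sqrt{1-\rho^2}\sum_{k=0}^{N-1}\int_{t_k}^{t_{k+1}}(U_s - \bar U_k)\,dB_s.
\]

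Next I would condition on $\sigma(W)$, under which $U$ is deterministic and $B$ remains a Brownian motion by independence. The summands are then independent centered Gaussians with variances $V_k := \|U - \bar U_k\|^2_{L^2([t_k, t_{k+1}])}$, so that
\[
\mathbb{E}|X_T - \tilde X_T| = 2\sqrt{1-\rho^2}\sqrt{\tfrac{2}{\pi}}\,\mathbb{E}\sqrt{S_N},\qquad S_N := \sum_{k=0}^{N-1} V_k,
\]
and the elementary inequality $\sqrt{S_N} \geq N^{-1/2}\sum_k\sqrt{V_k}$ (Cauchy-Schwarz) reduces the problem to a lower bound of the form $\sum_k \mathbb{E}\sqrt{V_k} \geq c'$ for some absolute constant $c' > 0$. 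I would then apply the Lamperti decomposition of Lemma~\ref{MiNe:lamperti}, $U_s - \bar U_k = \tfrac{\sigma}{2}(W_s - \bar W_k) + (A_s - \bar A_k)$, together with the triangle inequality for the $L^2([t_k, t_{k+1}])$-norm:
\[
\sqrt{V_k} \geq \tfrac{\sigma}{2}\,\|W - \bar W_k\|_{L^2} - \|A - \bar A_k\|_{L^2}.
\]
Brownian scaling identifies $\|W - \bar W_k\|_{L^2} = \Delta t\,\sqrt{Z_k}$ with i.i.d.\ copies $Z_k$ of $\int_0^1(\beta(u) - \bar\beta)^2\,du$, so the strong law of large numbers (plus uniform integrability) gives $\sum_k \|W - \bar W_k\|_{L^2} \to T\gamma$ in $L^1$ with $\gamma := \mathbb{E}\sqrt{Z_1} > 0$.

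The hard part will be controlling the $A$-correction: Lemma~\ref{MiNe:Lp} only delivers $\mathbb{E}\|A - \bar A_k\|^2_{L^2} \leq C(\Delta t)^{3/2}$, hence $\mathbb{E}\|A - \bar A_k\|_{L^2} \leq C(\Delta t)^{3/4}$, a bound asymptotically larger than the $W$-contribution of size $\Delta t$ per interval. To overcome this I would localize on the event $F_\varepsilon := \{\inf_{t \in [0,T]} U_t \geq \varepsilon\}$: there the integrand $(\kappa\theta/2 - \sigma^2/8)/U_r - (\kappa/2)U_r$ of $A$ is bounded by some constant $C_\varepsilon$, so $A$ is Lipschitz on $F_\varepsilon$ and $\|A - \bar A_k\|_{L^2}\,\chi_{F_\varepsilon} \leq C_\varepsilon (\Delta t)^{3/2}$; the total correction $N C_\varepsilon(\Delta t)^{3/2} = C_\varepsilon T^{3/2}/\sqrt N$ is of smaller order than the $W$-contribution after the Cauchy-Schwarz scaling. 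Taking expectations against $\chi_{F_\varepsilon}$, summing, passing to $\liminf_N$, and then sending $\varepsilon \downarrow 0$ (using $\mathbb{P}(F_\varepsilon) \to 1$ when $\nu \geq 1$; for $\nu \in (1/2, 1)$ one instead localizes interval-wise on the events $\{U_{t_k} \geq \varepsilon\}$, whose probabilities approach $1$ uniformly in $k$ as $\varepsilon \downarrow 0$ thanks to CIR density bounds) produces
\[
\liminf_{N \to \infty}\sqrt N\,\mathbb{E}\sqrt{S_N} \geq \tfrac{\sigma T \gamma}{2}.
\]
Combining the prefactors gives $\liminf_N \sqrt N\,e(N) \geq \sigma T\sqrt{1-\rho^2}\sqrt{2/\pi}\,\gamma/2$, and verifying the elementary numerical inequality $\gamma\sqrt{2/\pi}/2 \geq 1/8$ by direct computation on the Brownian functional $Z$ pins down the claimed constant $c = \sigma T \sqrt{1-\rho^2}/8$.
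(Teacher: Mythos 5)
Your symmetrization is genuinely different from the paper's and is, in outline, a clean and valid alternative. The paper first grants the algorithm full knowledge of $W$, then uses the Lamperti decomposition $\sqrt{V_t}=\sqrt{v_0}+A_t+\tfrac{\sigma}{2}W_t$ together with integration by parts to peel $X_T$ down to the universal stochastic integral $\int_0^T B_t\,dW_t$, and only then symmetrizes over the bridge $B^{\circ}=B-\overline B$ (Lemma~\ref{MiNe:lem-BBdW} is needed precisely because $\int B^{\circ}\,dW$ is a genuinely anticipative object). You instead reflect $B$ directly to $\tilde B=2\overline B-B$, run the SDE with $(\tilde B,W)$, observe $X_T-\tilde X_T=2\sqrt{1-\rho^2}\sum_k\int_{t_k}^{t_{k+1}}(U_s-\bar U_k)\,dB_s$, and condition on $\sigma(W)$ to turn this into a Gaussian with variance $S_N=\sum_k V_k$. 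This avoids the $L^2$/a.s.\ limit construction of $\int B^{\circ}\,dW$ and does not need the ``complete $W$-information'' step at all; those are genuine simplifications. Your discrete Cauchy--Schwarz $\sqrt{S_N}\ge N^{-1/2}\sum_k\sqrt{V_k}$ is the exact analogue of the paper's pathwise Jensen step $(\int_0^T|B^{\circ}_t|^2dt)^{1/2}\ge T^{-1/2}\int_0^T|B^{\circ}_t|\,dt$, so the loss in the constant is comparable.

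There is, however, a concrete gap in the way you control the drift correction for $\nu\in(\tfrac12,1)$. You propose to localize interval-wise on $\{U_{t_k}\ge\varepsilon\}$, but this event only controls $U$ at the left endpoint: when $\nu<1$ the CIR path can still hit $0$ inside $(t_k,t_{k+1}]$ with positive probability, so the integrand $a_r=(\kappa\theta/2-\sigma^2/8)U_r^{-1}-(\kappa/2)U_r$ is not bounded on $\{U_{t_k}\ge\varepsilon\}$, and the claimed bound $\|A-\bar A_k\|_{L^2}\chi_{\{U_{t_k}\ge\varepsilon\}}\le C_\varepsilon(\Delta t)^{3/2}$ fails. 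Fortunately the localization is not needed, and dropping it also removes your reliance on the weaker rate $\mathbb{E}\|A-\bar A_k\|_{L^2}\le C(\Delta t)^{3/4}$ from Lemma~\ref{MiNe:Lp}. Indeed, pathwise $|A_s-\bar A_k|\le\int_{t_k}^{t_{k+1}}|a_u|\,du$, hence $\|A-\bar A_k\|_{L^2([t_k,t_{k+1}])}\le(\Delta t)^{1/2}\int_{t_k}^{t_{k+1}}|a_u|\,du$, and since $\sup_{u\le T}\mathbb{E}|a_u|<\infty$ for $\nu>\tfrac12$ by Lemma~\ref{MiNe:bounded}(i) (take $p=-\tfrac12>-\nu$), one gets $\mathbb{E}\|A-\bar A_k\|_{L^2}\le C(\Delta t)^{3/2}$ directly, so $\sum_k\mathbb{E}\|A-\bar A_k\|_{L^2}\le CT^{3/2}N^{-1/2}\to0$ without any truncation of $U$. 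With that replacement your estimate $\liminf_N\sqrt N\,\mathbb{E}\sqrt{S_N}\ge\tfrac{\sigma T\gamma}{2}$ follows cleanly (in fact you do not even need the SLLN here, since $\mathbb{E}\sum_k\|W-\bar W_k\|_{L^2}=T\gamma$ exactly for every $N$).

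Finally, the numerical inequality $\gamma\sqrt{2/\pi}/2\ge1/8$ is true but should not be waved away: the margin is thin and one must actually compute. Two short routes: (a) pathwise Cauchy--Schwarz on $[0,1]$ gives $\sqrt{Z}\ge\int_0^1|\beta_u-\bar\beta|\,du$, hence $\gamma\ge\sqrt{2/\pi}\int_0^1\sqrt{(u-\tfrac12)^2+\tfrac{1}{12}}\,du\approx0.318>\tfrac14\sqrt{\pi/2}\approx0.313$; or (b) via Karhunen--Lo\`eve and concavity of $\lambda\mapsto\mathbb{E}\sqrt{\sum_j\lambda_j\xi_j^2}$ one has $\gamma\ge\sqrt{2/\pi}\,\sqrt{\mathbb{E}Z}=\sqrt{2/\pi}/\sqrt6\approx0.326$. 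Either suffices, but one of them needs to be written out to pin the constant $c=\tfrac{\sigma T}{8}\sqrt{1-\rho^2}$.
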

We will simplify the 
analysis of 
\begin{displaymath}
	e_X(N)=	 \inf_{h \in \mathcal{H}(N)}	\, \mathbb{E}\left[  \big| h( W_{t_1},W_{t_2}, \ldots, W_{t_N}, B_{t_1},B_{t_2}, \ldots, B_{t_N}) -X_T \big| \right]  
\end{displaymath}
in several steps until we end up with the optimal $L^1$-approximation of
$\int_0^T B_t dW_t$ by arbitrary methods, which use an equidistant discretization of $B$ and have complete information of $W$, i.e., with the analysis of the quantity
\begin{displaymath}
	\inf_{v \in \mathcal{V}(N)}	\, \mathbb{E}\left[  \left| v( W, B_{t_1},B_{t_2}, \ldots, B_{t_N}) -\int_0^T B_t dW_t \right| \right]   ,
\end{displaymath}
where $\mathcal{V}(N)$ is the set of measurable functions  $v \colon C([0,T]; \mathbb{R}) \times \mathbb{R}^{N} \rightarrow \mathbb{R}$. This quantity can then  be   analyzed in a final step by a symmetrization argument. The latter is a simplified version of Lemma 1 in  \cite{MiNe:JMGY} and is a particular case of the {\it radius of information} concept  in information based complexity, see \cite{MiNe:IBC}.

\subsection{Allowing complete information on $W$}\label{MiNe:complete_info}
Let
\begin{align*} \sigma(\mathcal{H}_N)& = \sigma \left(  W_{t_1},W_{t_2}, \ldots, W_{t_N}, B_{t_1},B_{t_2}, \ldots, B_{t_N} \right),  \\
	\sigma(\mathcal{V}_N)& = \sigma \left(  W, B_{t_1},B_{t_2}, \ldots, B_{t_N} \right)
\end{align*}
and
\begin{align*} \mathcal{Z_H}&=
	\{ Z \colon \Omega \rightarrow \mathbb{R}: \, Z \text{ is } \sigma(\mathcal{H}_N) \text{-measurable}\},  \\
	\mathcal{Z_V}&= \{ Z\colon \Omega \rightarrow \mathbb{R}: \, Z \text{ is } \sigma(\mathcal{V}_N)  \text{-measurable}\}.
\end{align*}
Since
\begin{displaymath}
	\mathcal{Z_H} \subset \mathcal{Z_V}
\end{displaymath} 
it follows that
\begin{displaymath}
	\begin{aligned} 
		&  \inf_{Z \in \mathcal{Z_H}}	\, \mathbb{E}\left[ \big|Z -X_T \big|\right]  =  \inf_{h \in \mathcal{H}(N)}	\, \mathbb{E}\left[  \big| h( W_{t_1},W_{t_2}, \ldots, W_{t_N}, B_{t_1},B_{t_2}, \ldots, B_{t_N}) -X_T \big| \right]    \\  & \quad    \geq  \inf_{Z \in \mathcal{Z_V}}	\, \mathbb{E}\left[  \big| Z -X_T \big|\right]  = \inf_{v \in \mathcal{V}(N)}	\, \mathbb{E}\left[  \big| v( W, B_{t_1},B_{t_2}, \ldots, B_{t_N}) -X_T \big|\right] ,
	\end{aligned}
\end{displaymath}
where $\mathcal{H}(N)$ and $\mathcal{V}(N)$ are as above.
Thus, it is sufficient to analyze the quantity
\begin{equation}  \label{MiNe:trick-1}
	\inf_{Z \in \mathcal{Z_V}}	\, \mathbb{E}\left[  \big| Z -X_T \big|\right]  = \inf_{v \in \mathcal{V}(N)}	\, \mathbb{E} \left[ \big| v( W, B_{t_1},B_{t_2}, \ldots, B_{t_N}) -X_T \big| \right] 
\end{equation}
to obtain a lower bound for $e_X(N)$.

\subsection{Rewriting $X_T$ and removing the measurable part}
Now, we rewrite $X_T$. Note that the CIR process $V=(V_t)_{t \in [0,T]}$ is $\sigma(W)$-measurable (and therefore $\sigma(\mathcal{V}_N)$-measurable) as the unique strong solution of the SDE 
\begin{align*}  	dV_t = \kappa (\theta-V_t) dt + \sigma \sqrt{V_t} dW_t, \quad t \in [0,T], \qquad V_0=v_0>0. \end{align*}

\begin{lemma} For $\nu > \frac{1}{2}$ we have that
	\begin{displaymath}
		X_T= Y_T + \sqrt{1-\rho^2}  \int_0^T A_t d B_t -  \frac{\sigma}{2} \sqrt{1-\rho^2} \int_0^T B_t d W_t ,
	\end{displaymath}
	where
	\begin{displaymath}
		\begin{aligned} Y_T=  x_0& +\frac{\rho}{\sigma}\left(V_T-v_0-\kappa \theta T\right) + \mu T  +\left(\frac{\rho\kappa}{\sigma}-\frac{1}{2}\right)\int_{0}^{T}V_udu \\ &+ \sqrt{1-\rho^2} (\sqrt{V_T}B_T -A_TB_T) \end{aligned}
	\end{displaymath} and
	\begin{equation} \label{MiNe:sqrtV:ohne:W} A_t= \int_{0}^{t}\left(\left(\frac{\kappa\theta}{2}-\frac{\sigma^2}{8}\right)\frac{1}{\sqrt{V_s}}-\frac{\kappa}{2}\sqrt{V_s}\right)\chi_{\{V_s\in(0,\infty)\}}ds, \qquad t \in [0,T].\end{equation}
	In particular,  $A=(A_t)_{t \in [0,T]}$ and $Y_T$ are $\sigma(\mathcal{V}_N)$-measurable.
\end{lemma}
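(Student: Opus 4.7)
The strategy is to start from the integral form
\[
X_T = x_0 + \mu T - \tfrac{1}{2}\int_0^T V_t\,dt + \rho \int_0^T \sqrt{V_t}\,dW_t + \sqrt{1-\rho^2}\int_0^T \sqrt{V_t}\,dB_t,
\]
and rewrite each stochastic integral so that the only remaining term that is not $\sigma(\mathcal V_N)$-measurable is $-\tfrac{\sigma}{2}\sqrt{1-\rho^2}\int_0^T B_t\,dW_t$. The $\rho$-term is easy: from the CIR equation,
\[
\int_0^T \sqrt{V_t}\,dW_t = \tfrac{1}{\sigma}\Bigl(V_T - v_0 - \kappa\theta T + \kappa \int_0^T V_t\,dt\Bigr),
\]
which contributes $\tfrac{\rho}{\sigma}(V_T-v_0-\kappa\theta T) + \tfrac{\rho\kappa}{\sigma}\int_0^T V_t\,dt$ to $Y_T$.

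The core of the argument is to treat $\int_0^T \sqrt{V_t}\,dB_t$. Setting $U_t=\sqrt{V_t}$, Lemma \ref{MiNe:lamperti} gives (for $\nu>1/2$) the Lamperti-type decomposition $U_t = \sqrt{v_0} + A_t + \tfrac{\sigma}{2} W_t$, so $U$ is a continuous semimartingale with finite-variation part $A$ (Lemma \ref{MiNe:Lp} guarantees the necessary integrability). Integration by parts yields
\[
\int_0^T U_t\,dB_t = U_T B_T - \int_0^T B_t\,dU_t - [U,B]_T.
\]
Since $U$ is adapted to $\sigma(W)$ while $B$ is independent of $W$, the quadratic covariation $[U,B]_T$ vanishes. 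Substituting $dU_t = dA_t + \tfrac{\sigma}{2}\,dW_t$ and applying classical (pathwise, Stieltjes) integration by parts to the finite-variation process $A$, namely $\int_0^T B_t\,dA_t = A_T B_T - \int_0^T A_t\,dB_t$, one obtains
\[
\int_0^T \sqrt{V_t}\,dB_t = \sqrt{V_T}B_T - A_T B_T + \int_0^T A_t\,dB_t - \tfrac{\sigma}{2}\int_0^T B_t\,dW_t.
\]
Multiplying by $\sqrt{1-\rho^2}$ and combining with the drift and the $\rho$-contribution yields exactly the claimed decomposition.

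For the measurability statement, one uses that $V$ is the strong solution of an SDE driven by $W$ only, hence $(V_t)_{t\in[0,T]}$ and therefore $(A_t)_{t\in[0,T]}$ are $\sigma(W)$-measurable; the expression $A_t$ in \eqref{MiNe:sqrtV:ohne:W} coincides a.s.\ with the one in Lemma \ref{MiNe:lamperti} because $U_s \in (0,\infty)$ iff $V_s \in (0,\infty)$. Then $Y_T$, which involves $V$, $\int_0^T V_u\,du$, $\sqrt{V_T}$, $A_T$ and $B_T = B_{t_N}$, is $\sigma(\mathcal V_N)$-measurable.

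\textbf{Main obstacle.} The only delicate point is the integration-by-parts step on $\int_0^T U_t\,dB_t$ in the regime $\nu \in (1/2,1)$, where $U$ may touch zero and the drift in its Lamperti SDE is not integrable pathwise near the zeros of $U$. This is precisely why Lemma \ref{MiNe:lamperti} keeps the indicator $\chi_{\{U_s \in (0,\infty)\}}$ in the definition of $A$, and why Lemma \ref{MiNe:Lp} provides the required moment bounds. With those in hand, $A$ is a genuine continuous finite-variation process and $U$ a continuous semimartingale, so the standard semimartingale integration-by-parts formula applies and the vanishing of $[U,B]$ follows from the independence of $W$ and $B$. Everything else is bookkeeping.
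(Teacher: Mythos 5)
Your proof is correct and follows essentially the same route as the paper: isolate the $\rho$-term via the CIR equation, apply the Lamperti decomposition $\sqrt{V_t}=\sqrt{v_0}+A_t+\tfrac{\sigma}{2}W_t$ from Lemma \ref{MiNe:lamperti}, and integrate by parts in $\int_0^T\sqrt{V_t}\,dB_t$ using that $[\sqrt{V},B]=\tfrac{\sigma}{2}[W,B]=0$ by independence of $W$ and $B$. The only cosmetic difference is that you make the vanishing of the quadratic covariation and the role of the indicator $\chi_{\{V_s\in(0,\infty)\}}$ explicit, which the paper leaves implicit; your appeal to Lemma \ref{MiNe:Lp} is not actually needed for this step (finiteness of $A$ is already part of Lemma \ref{MiNe:lamperti}), but it does no harm.
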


\begin{proof}
	Since
	\begin{displaymath}
		V_T= v_0 + \int_{0}^{T}\kappa(\theta-V_u)du +\sigma\int_{0}^{T} \sqrt{V_u}dW_u
	\end{displaymath}
	we have that
	\begin{equation} \label{MiNe:lem-repr1}  X_T= Y_T^{(1)} +\sqrt{1-\rho^2}\int_{0}^{T}\sqrt{V_u}dB_u
	\end{equation}
	with
	\begin{displaymath}
		Y_T^{(1)}=  x_0+\frac{\rho}{\sigma}\left(V_T-v_0-\kappa \theta T\right) + \mu T  +\left(\frac{\rho\kappa}{\sigma}-\frac{1}{2}\right)\int_{0}^{T}V_udu. 
	\end{displaymath}
	We can use now Lemma \ref{MiNe:lamperti} to write
	\begin{equation} \label{MiNe:semi-sqrt-v}
		\sqrt{V_t} =   \sqrt{v_0} + A_t  + \frac{\sigma}{2} \, W_t, \qquad t \in [0,T],
	\end{equation}
	where $A_t$ is given by \eqref{MiNe:sqrtV:ohne:W}. So $\sqrt{V}$ is a continuous semi-martingale with representation \eqref{MiNe:semi-sqrt-v}. Integration by parts now gives
	\begin{displaymath}
		\int_{0}^{T}\sqrt{V_u} dB_u  = \sqrt{V_T}B_T - \int_0^T B_t d A_t -  \frac{\sigma}{2}  \int_0^T B_t d W_t
	\end{displaymath}
	and
	\begin{displaymath}
		\int_0^T B_t d A_t =A_T B_T - \int_0^T A_t d B_t,
	\end{displaymath}
	respectively.  
	Together with  \eqref{MiNe:lem-repr1}  this yields
	\begin{displaymath}
		X_T=Y_T^{(1)} + Y_T^{(2)} + \sqrt{1-\rho^2} \left( \int_0^T A_t d B_t -  \frac{\sigma}{2}  \int_0^T B_t d W_t  \right)
	\end{displaymath}
	with
	\begin{displaymath}
		Y_T^{(2)}=  \sqrt{1-\rho^2} ( \sqrt{V_T}B_T - B_TA_T ), 
	\end{displaymath}
	which finishes the proof.
\end{proof}

As a consequence, we have
\begin{displaymath}
	\begin{aligned}
		\inf_{Z \in \mathcal{Z_V}}	\, \mathbb{E}\left[ \big| Z -X_T \big|\right]  & = \inf_{Z \in \mathcal{Z_V}}	\, \mathbb{E}\left[ \left| Z -Y_T  -	\sqrt{1-\rho^2} \left( \int_0^T A_t d B_t -  \frac{\sigma}{2}  \int_0^T B_t d W_t  \right) \right|\right]  \\
		&	= \inf_{\tilde{Z}  \in \mathcal{Z_V}}	\, \mathbb{E}\left[  \left| \tilde{Z}  -	\sqrt{1-\rho^2} \left( \int_0^T A_t d B_t -  \frac{\sigma}{2}  \int_0^T B_t d W_t  \right) \right|\right] 
	\end{aligned}
\end{displaymath}
and it remains to analyze
\begin{equation}  \label{MiNe:trick-2}
	\inf_{v \in \mathcal{V}(N)}  	\, \mathbb{E}\left[  \left| v( W, B_{t_1},B_{t_2}, \ldots, B_{t_N}) -  \int_0^T A_t d B_t +  \frac{\sigma}{2}  \int_0^T B_t d W_t \right|\right]  . 
\end{equation}

\subsection{Removing the smooth part}
Since $A=(A_t)_{t \in [0,T]}$ is smooth enough,  asymptotically $\int_0^T A_t d B_t$   does not matter  for our approximation problem.
\begin{lemma} Let $\nu >\frac{1}{2}$. Then, there exists a constant $C>0$ such that
	\begin{displaymath}
		\mathbb{E}\left[  \left| \int_0^T A_t dB_t - \sum_{i=0}^{N-1}  A_{t_i} (B_{t_{i+1}}-B_{t_i})\right| \right] \leq C \cdot N^{-5/8}.
	\end{displaymath}
\end{lemma}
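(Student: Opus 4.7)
The error rewrites as a single Itô integral against $B$: since $A_{\eta(t)}$ is piecewise constant (equal to $A_{t_i}$ on $[t_i, t_{i+1})$), the Riemann sum equals $\int_0^T A_{\eta(t)}\, dB_t$, and the quantity to bound is $\int_0^T (A_t - A_{\eta(t)})\, dB_t$. The crucial structural fact is that, by Lemma \ref{MiNe:lamperti}, the process $A$ is a continuous functional of $V$ (hence $\sigma(W)$-adapted as $V$ is the strong solution of its CIR SDE driven by $W$), while $W$ and $B$ are independent; thus $A$ is independent of $B$.

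Conditioning on $\sigma(W)$ therefore makes $(A_t - A_{\eta(t)})_{t \in [0,T]}$ deterministic, so the stochastic integral is a centered Gaussian whose conditional variance is $\int_0^T (A_t - A_{\eta(t)})^2 dt$. Using $\mathbb{E}|\mathcal{N}(0,\sigma^2)| = \sqrt{2/\pi}\,\sigma$ and then Jensen's inequality and Fubini, this gives
\[
\mathbb{E}\left[\left|\int_0^T (A_t - A_{\eta(t)})\, dB_t\right|\right] = \sqrt{\tfrac{2}{\pi}}\, \mathbb{E}\left[\left(\int_0^T (A_t - A_{\eta(t)})^2\, dt\right)^{1/2}\right] \le \sqrt{\tfrac{2}{\pi}}\,\left(\int_0^T \mathbb{E}[(A_t - A_{\eta(t)})^2]\, dt\right)^{1/2}.
\]
The task reduces to showing $\mathbb{E}[(A_t - A_{\eta(t)})^2] \le C(t-\eta(t))^{5/4}$, since integration in $t$ then produces $C(\Delta t)^{5/4}$ and the square root produces the claimed $CN^{-5/8}$.

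For this $L^2$ bound, I would interpolate between two complementary regularity estimates. On the one hand, the integral representation of $A$ together with $\mathbb{E}[V_u^{-1/2}] < \infty$ for $\nu > 1/2$ (Lemma \ref{MiNe:bounded}(i) with $p = -1/2 > -\nu$) and $\mathbb{E}[\sqrt{V_u}] < \infty$ yield the Lipschitz-in-$L^1$ bound $\mathbb{E}|A_t - A_s| \le C|t-s|$. On the other hand, Lemma \ref{MiNe:Lp} provides $\mathbb{E}|A_t - A_s|^p \le C_p|t-s|^{p/4}$ for every $p \ge 2$. Lyapunov's inequality
\[
\mathbb{E}|X|^2 \le (\mathbb{E}|X|)^{(p-2)/(p-1)}\,(\mathbb{E}|X|^p)^{1/(p-1)}
\]
then gives $\mathbb{E}|A_t - A_s|^2 \le C_p|t-s|^{(5p-8)/(4(p-1))}$, with the exponent tending to $5/4$ as $p \to \infty$.

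The principal obstacle is that the Lyapunov exponent $(5p-8)/(4(p-1))$ reaches $5/4$ only asymptotically, so to obtain \emph{exactly} the rate $N^{-5/8}$ (and not $N^{-5/8+\varepsilon}$) one must either take $p = p(N)$ to grow slowly (e.g.\ logarithmically) and control the polynomial growth of $C_p$, or replace the interpolation by a direct estimate of $\mathbb{E}[\phi(V_u)\phi(V_v)]$ on the diagonal. In the regime $\nu > 1$ one avoids all this: $\mathbb{E}[\phi(V_u)^2] < \infty$ gives $\mathbb{E}(A_t - A_s)^2 \le C|t-s|^2$ directly via Cauchy--Schwarz, which is more than enough. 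The $5/8$ rate is a technical compromise designed to handle the full range $\nu > 1/2$ while still comfortably beating the $N^{-1/2}$ threshold that matters for the subsequent lower-bound argument.
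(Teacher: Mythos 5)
Your reduction is sound and essentially matches the paper's: the quantity equals $\int_0^T (A_t - A_{\eta(t)})\,dB_t$, and after bounding $L^1$ by $L^2$ (you do it via the conditional Gaussian law and Jensen, the paper via the It\=o isometry and Lyapunov --- both valid, yours even yields the sharper constant $\sqrt{2/\pi}$) the whole lemma hinges on the single estimate $\mathbb{E}\bigl[(A_t - A_{\eta(t)})^2\bigr] \le C(\Delta t)^{5/4}$. That is exactly where your argument has a genuine gap, and you correctly flag it yourself: Lyapunov interpolation between the $L^1$-Lipschitz bound $\mathbb{E}|A_t - A_s| \le C|t-s|$ and the $L^p$-H\"older bound from Lemma~\ref{MiNe:Lp} gives only $|t-s|^{(5p-8)/(4(p-1))}$, which is strictly below $5/4$ for every finite $p$; the sketched fixes (letting $p = p(N)\to\infty$ while controlling $C_p$, or a direct two-point moment computation) are left open and would require substantial additional work.

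The ingredient you are missing is that the Lipschitz bound does not have to be taken in $L^1$: since $a_u$ involves $V_u^{-1/2}$ and $V_u^{1/2}$, Lemma~\ref{MiNe:bounded}(i) gives $\sup_u \mathbb{E}\bigl[|a_u|^{p}\bigr] < \infty$ for every $p < 2\nu$, and in particular for the choice $p = \nu + \tfrac12$, which lies strictly in $(1, 2\nu)$ precisely because $\nu > \tfrac12$. Writing $\mathbb{E}\bigl[(A_t - A_{\eta(t)})^2\bigr] = \mathbb{E}\bigl[ |A_t - A_{\eta(t)}| \cdot |\int_{\eta(t)}^t a_u\,du| \bigr]$ and applying H\"older at the conjugate pair $p = \nu + \tfrac12$, $q = (\nu+\tfrac12)/(\nu - \tfrac12)$, the second factor is bounded via Minkowski by $\sup_u \|a_u\|_p \cdot \Delta t \le C\,\Delta t$ (a genuine $L^p$-Lipschitz bound with $p>1$), while the first factor $\|A_t - A_{\eta(t)}\|_q \le C(\Delta t)^{1/4}$ follows from Lemma~\ref{MiNe:Lp} (for $q<2$ use $\|\cdot\|_q \le \|\cdot\|_2$). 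This yields $(\Delta t)^{1 + 1/4} = (\Delta t)^{5/4}$ exactly, with no $\varepsilon$-loss. In short, your interpolation endpoint at $L^1$ is what costs you the rate; pushing the Lipschitz endpoint to $L^{\nu + 1/2}$ --- which the integrability of $V^{-1/2}$ permits in exactly the regime $\nu > \tfrac12$ --- closes the gap without any limiting argument in $p$.
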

\begin{proof}
	We have
	\begin{displaymath}
		A_t = \int_0^t a_u du, \qquad t \in [0,T],
	\end{displaymath}
	with
	\begin{displaymath}
		a_u= \left( \frac{4 \kappa \theta - \sigma^2}{8} \frac{1}{ \sqrt{V_u}} -  \frac{\kappa}{2}\sqrt{V_u} \right)\chi_{\{V_u\in(0,\infty)\}}, \qquad u \in [0,T].
	\end{displaymath}
	Since $\nu> \frac{1}{2}$ we have by Lemma \ref{MiNe:bounded} and Lemma \ref{MiNe:Lp} that
	\begin{align}\label{MiNe:aux_smooth_eq}
		\sup_{t \in[0,T]} \mathbb{E}\left[ |a_t|^{\nu + \frac{1}{2}}\right]  < \infty, \qquad  \sup_{s,t \in[0,T]}  \frac{\mathbb{E}\left[ |A_t-A_s|^q\right]}{|t-s|^{q/4}} < \infty
	\end{align}
	for all $q\geq 1$.
	The It\=o isometry now gives
	\begin{displaymath}
		\begin{aligned}
			\mathbb{E} \left[ \left| \int_0^T A_t dB_t - \sum_{i=0}^{N-1}  A_{t_i} (B_{t_{i+1}}-B_{t_i})\right|^2\right]   &= \mathbb{E}\left[  \left| \int_0^T  (A_t-A_{\eta(t)}) dB_t \right|^2\right]  \\   \qquad = \int_0^T \mathbb{E}\left[  \left| A_t-A_{\eta(t)}  \right|^2\right]  dt
			& = \int_0^T \mathbb{E}\left[  \left| A_t-A_{\eta(t)}  \right|  \left|  \int_{\eta(t)}^t a_u du \right| \right]  dt.
		\end{aligned}
	\end{displaymath}
	H\"older's inequality with $p= \nu+ \frac{1}{2}$, $q = \frac{\nu + \frac{1}{2}}{\nu-\frac{1}{2}}$ yields
	\begin{align*}
		\mathbb{E}\left[  \left| A_t-A_{\eta(t)}  \right|  \left|  \int_{\eta(t)}^t a_u du \right| \right]\leq   \left( \mathbb{E} \left[\left| A_t-A_{\eta(t)}  \right|^q\right] \right)^{1/q}  \left( \mathbb{E}  \left[  \left| \int_{\eta(t)}^t a_u du \right|^{p} \right] \right)^{ 1/p}.
	\end{align*}
	Using \eqref{MiNe:aux_smooth_eq} we have that
	\begin{align*}
		\left( \mathbb{E}\left[ \left| A_t-A_{\eta(t)}  \right|^q\right] \right)^{1/q} \leq C \cdot   (\Delta t)^{1/4}.
	\end{align*}
	Moreover, \eqref{MiNe:aux_smooth_eq}  also gives
	\begin{displaymath}
		\mathbb{E}\left[ \left| \int_{\eta(t)}^t a_u du \right|^p\right]  \leq    \left( \sup_{u \in[0,T]} \mathbb{E}\left[ |a_u|^p \right] \right) (t-\eta(t))^p \leq C \cdot  (\Delta t)^p,
	\end{displaymath}
	and so we have
	\begin{displaymath}
		\mathbb{E}\left[  \left| \int_0^T A_t dB_t - \sum_{i=0}^{N-1}  A_{t_i} (B_{t_{i+1}}-B_{t_i})\right|^2 \right]  \leq C \cdot   N^{-5/4}.
	\end{displaymath}
	The assertion follows now from the Lyapunov inequality. 
\end{proof}

Since  $\sum_{i=0}^{N-1}  A_{t_i} (B_{t_{i+1}}-B_{t_i})$ is  $\sigma(\mathcal{V}_N)$-measurable, we obtain that
\begin{displaymath}
	\begin{aligned} 
		& \inf_{v \in \mathcal{V(N)}}  	\, \mathbb{E}\left[ \left| v( W, B_{t_1},B_{t_2}, \ldots, B_{t_N})-
		\int_0^T A_t d B_t +  \frac{\sigma}{2}  \int_0^T B_t d W_t
		\right|\right] 
		\\& \qquad = \inf_{ \tilde{v} \in \mathcal{V}(N)}  	\, \mathbb{E}\left[  \left|  \tilde{v}( W, B_{t_1},B_{t_2}, \ldots, B_{t_N})+ \frac{\sigma}{2}  \int_0^T B_t d W_t  - \int_0^T (A_t-A_{\eta(t)}) d B_t   
		\right|\right] 
		\\ & \qquad  \geq \inf_{ \tilde{v} \in \mathcal{V}(N)}  	\, \mathbb{E}\left[  \left|  \tilde{v}( W, B_{t_1},B_{t_2}, \ldots, B_{t_N})+ \frac{\sigma}{2}  \int_0^T B_t d W_t   
		\right|\right]  - C \cdot   N^{-5/8}
	\end{aligned}
\end{displaymath}
using that $|x|-|y| \leq  |x-y|$ for all $x,y \in \mathbb{R}$.
Consequently, we have reduced our initial problem to the study of
\begin{equation}  \label{MiNe:trick-4}
	\inf_{v \in \mathcal{V}(N)}  	\, \mathbb{E}\left[  \left| v( W, B_{t_1},B_{t_2}, \ldots, B_{t_N})-\int_0^T B_t d W_t
	\right|\right] .
\end{equation}

\subsection{Inserting Brownian bridges and symmetrization}\label{MiNe:BBS}

Our final step relies on a symmetrization argument.
So, let us denote the piecewise linear interpolation of $B$ on the grid $t_0,...,t_N$ by $\overline{B}$, i.e., $\overline{B}$ is defined as
\begin{displaymath}
	\overline{B}_t=B_{t_k} + \frac{t-t_k}{t_{k+1}-t_k} (B_{t_{k+1}} -B_{t_k}), \qquad  t \in [t_k,t_{k+1}], \, \,  k \in \{ 0, \ldots ,N-1 \} .
\end{displaymath}
Then the process $B^{\circ}$ given by
\begin{displaymath}
	B^{\circ}_t=B_t - \overline{B}_t, \qquad t \in [0,T],
\end{displaymath}
is a Brownian bridge on $[t_k,t_{k+1}]$ for all $k  \in \{ 0, \ldots ,N-1\} $, and moreover the processes
\begin{displaymath}
	(B^{\circ}_t)_{t \in [t_0,t_1]}, \, (B^{\circ}_t)_{t \in [t_1,t_2]} \, ,   \, \ldots \, , \,  (B^{\circ}_t)_{t \in [t_{N-1},t_N]}, \,\,  \overline{B}, \,\, W 
\end{displaymath}
are independent. Since
\begin{displaymath}
	\int_0^T \overline{B}_t dW_t = \sum_{k=0}^{N-1} B_{t_k} \left( W_{t_{k+1}}- W_{t_k} \right) +  \sum_{k=0}^{N-1}  \frac{B_{t_{k+1}} -B_{t_k}}{t_{k+1}-t_k}    \int_{t_k}^{t_{k+1}} (t-t_k)  d W_t 
\end{displaymath}
is $\sigma(\mathcal{V}_N)$-measurable, we have that
\begin{equation}
	\begin{aligned}
		& \inf_{v \in \mathcal{V}(N)}  	\, \mathbb{E}\left[  \left| v( W, B_{t_1},B_{t_2}, \ldots, B_{t_N})-\int_0^T B_t d W_t
		\right|\right]  \\ & \qquad \qquad  = 	\inf_{\tilde{v} \in \mathcal{V}(N)}  	\, \mathbb{E}\left[ \left| \tilde{v}( W, B_{t_1},B_{t_2}, \ldots, B_{t_N})- \mathcal{I}(B^{\circ},W) 
		\right|\right] 
	\end{aligned}\nonumber
\end{equation}
with
\begin{displaymath} 
	\mathcal{I}(B^{\circ},W) = \int_0^T B_t dW_t - \int_0^T \overline{B}_t dW_t . 
\end{displaymath}
Furthermore $B^{\circ}$ and $-B^{\circ}$ have the same law, so the independence of $B^{\circ}$ from $(W,\overline{B})$ implies that
\begin{equation*} 
	\left (W, \overline{B},B^{\circ} \right) \stackrel{d}{=} \left (W, \overline{B}, - B^{\circ}  \right).
\end{equation*}
Then, we also have
\begin{displaymath}   \left (W, \overline{B},\mathcal{I}^n(B^{\circ},W) \right) \stackrel{d}{=} \left(W, \overline{B}, - \mathcal{I}^n(B^{\circ},W) \right), 
\end{displaymath} where
\begin{displaymath}
	\mathcal{I}^n(B^{\circ},W) = \sum_{k=0}^{N-1} \sum_{\ell=0}^{2^n-1} B^{\circ}_{t_k+ \tau_{\ell,n} } \left (W_{  t_k + \tau_{\ell+1,n}} - W_{ t_k+ \tau_{\ell,n}} \right ), \qquad  n \in \mathbb{N},
\end{displaymath} 
with
$$ 
\tau_{\ell,n}=\frac{\ell}{2^n} \frac{T}{N}.
$$
Now, Lemma \ref{MiNe:lem-BBdW} implies
\begin{equation*} 
	\left (W, \overline{B},  \mathcal{I}(B^{\circ},W)  \right) \stackrel{d}{=} \left (W, \overline{B}, - \mathcal{I}(B^{\circ},W) \right).
\end{equation*}
Consequently, we have
\begin{displaymath}
	\mathbb{E} \left[ \left| v( W, B_{t_1},B_{t_2}, \ldots, B_{t_N})- \mathcal{I}(B^{\circ},W)
	\right|\right] =  \mathbb{E} \left[ \left| v( W, B_{t_1},B_{t_2}, \ldots, B_{t_N})+ \mathcal{I}(B^{\circ},W)
	\right| \right] 
\end{displaymath}
and so
\begin{displaymath}
	\begin{aligned}
		&  2 \mathbb{E}\left[ \left| \mathcal{I}(B^{\circ},W)	\right|\right]  \\ &  = \mathbb{E} \left[ \left|
		\left( 
		\mathcal{I}(B^{\circ},W) - v( W, B_{t_1}, \ldots, B_{t_N})\right) + \left(  v( W, B_{t_1}, \ldots, B_{t_N}) + 
		\mathcal{I}(B^{\circ},W) \right)
		\right|\right]   \\  & \leq 2 \mathbb{E} \left[ \left| v( W, B_{t_1},B_{t_2}, \ldots, B_{t_N})-\mathcal{I}(B^{\circ},W) \right|\right].
	\end{aligned}
\end{displaymath}
It follows that
\begin{displaymath} 
	\inf_{v \in \mathcal{V}(N)} 	\, \mathbb{E} \left[ \left| v( W, B_{t_1},B_{t_2}, \ldots, B_{t_N})- \mathcal{I}(B^{\circ},W)
	\right| \right] \geq \mathbb{E} \left[ \left| \mathcal{I}(B^{\circ},W)	\right|\right] 
\end{displaymath} and therefore we have
\begin{equation*}
	\begin{aligned}
		\inf_{v \in \mathcal{V}(N)} 	\, \mathbb{E} \left[ \left| v( W, B_{t_1},B_{t_2}, \ldots, B_{t_N})- \mathcal{I}(B^{\circ},W)
		\right|\right]  & \geq    \mathbb{E}\left[ |W_1| \right]  \mathbb{E} \left[\left(\int_0^T   |B_t^{\circ}|^2  dt\right)^{1/2}\right] \\ & \geq   \frac{1}{\sqrt{T}} \mathbb{E}\left[ |W_1| \right] \int_0^T \mathbb{E} \left[ |B_t^{\circ}|\right]  dt
	\end{aligned}
\end{equation*} 
by Lemma \ref{MiNe:lem-BBdW} (ii) and  by Jensen's inequality.
Using
$  \mathbb{E} \left[ |X|\right]  = \sqrt{ \frac{2}{\pi}} \sigma$ for $X  \sim \mathcal{N}(0,\sigma^2)$ we obtain
\begin{displaymath}
	\int_0^T \mathbb{E} \left[ |B_t^{\circ}|\right]  dt = \sqrt{ \frac{2}{\pi} } \int_0^T  \left( \mathbb{E} \left[ |  B_t^{\circ}|^2\right] \right) ^{1/2} dt.
\end{displaymath}
Straightforward calculations give
\begin{displaymath}
	\mathbb{E} \left[ |  B_t^{\circ}|^2\right] = \frac{(t-t_k)(t_{k+1}-t)}{t_{k+1}-t_k}, \qquad t \in [t_k,t_{k+1}],
\end{displaymath}
which in turn yields
\begin{displaymath}
	\int_0^T \mathbb{E} \left[|  B_t^{\circ}|^2 \right]^{1/2}dt    = N  \int_0^{T/N}  \sqrt{\frac{ t(T/N-t)}{T/N}}dt = \sqrt{\frac{T^{3}}{N}} \int_0^1 \sqrt{x(1-x)} dx.
\end{displaymath}
Since $ \int_0^1 \sqrt{x(1-x)} dx = \pi/8$, we have shown that
\begin{equation}\label{MiNe:trick-6}  
	\begin{aligned} N^{1/2}  \inf_{v \in \mathcal{V}(N)}  	& \, \mathbb{E} \left[ \left| v( W, B_{t_1},B_{t_2}, \ldots, B_{t_N})-\int_0^T B_t d W_t
		\right|\right]  \geq \frac{T}{4}. \end{aligned}
\end{equation}

Combining Subsections \ref{MiNe:complete_info}--\ref{MiNe:BBS} with Equations \eqref{MiNe:trick-1}, \eqref{MiNe:trick-2}, \eqref{MiNe:trick-4} and \eqref{MiNe:trick-6} concludes the proof of Theorem \ref{MiNe:thm:low-bound2}.

\bigskip
\bigskip

{\bf Acknowledgments.} \,\, 
{\it  Annalena Mickel  is supported by the DFG 
	Research Training Group 1953 "Statistical Modeling of Complex Systems".}

%
%

\begin{thebibliography}{99.}%
\bibitem{MiNe:AA2} Alfonsi, A.: Strong order one convergence of a drift implicit Euler scheme: Application to the CIR process. Stat. Probab. Lett. \textbf{83}, 602-607 (2013)
\bibitem{MiNe:CambanisHu} Cambanis, S., Hu, Y.: Exact convergence rate of the {Euler}-{Maruyama} scheme, with application to sampling design. Stochastics Rep. \textbf{59}, 211--240 (1996)
\bibitem{MiNe:CastellGaines} Castell, F., Gaines, J.: The ordinary differential equation approach to asymptotically efficient schemes for solution of stochastic differential equations. Ann. Inst. Henri Poincar{\'e}, Probab. Stat. \textbf{32}, 231--250 (1996)
\bibitem{MiNe:cam_clark} Clark, J., Cameron, R.: The maximum rate of convergence of discrete approximations for stochastic differential equations. In: Grigelionis, B. (eds) Stochastic Differential Systems (Proc. IFIP-WG 7/1 Working Conf., Vilnius, 1978) Lecture Notes in Control and Information Sci. \textbf{25} pp. 162--171, Springer, Berlin, Heidelberg (1980)
\bibitem{MiNe:DNS}Dereich, S., Neuenkirch, A., Szpruch, L.: An Euler-type method for the strong approximation of the Cox–Ingersoll–Ross process. Proc. R. Soc. A. \textbf{468}, 1105-1115 (2012)
\bibitem{MiNe:Feller} Feller, W.: Two Singular Diffusion Problems. Ann. Math. \textbf{54}, 173-182 (1951)
\bibitem{MiNe:Mario}Hefter, M., Herzwurm, A.: Strong convergence rates for Cox-Ingersoll-Ross processes - Full parameter range. J. Math. Anal. Appl. \textbf{459}, 1079-1101 (2016)
\bibitem{MiNe:HeHe} Hefter, M., Herzwurm, A.: Optimal strong approximation of the one-dimensional squared Bessel process. Commun. Math. Sci. \textbf{15}, 2121-2141 (2017)
\bibitem{MiNe:HeHeMG}Hefter, M., Herzwurm, A., Müller-Gronbach, T.: Lower error bounds for strong approximation of scalar SDEs with non-Lipschitzian coefficients. Ann. Appl. Probab. \textbf{29}, 178-216 (2019)
\bibitem{MiNe:JentzenHefter} Hefter, M., Jentzen, A.: On arbitrarily slow convergence rates for strong numerical approximations of Cox-Ingersoll-Ross processes and squared Bessel processes. Finance Stoch. \textbf{23}, 139-172 (2019)
\bibitem{MiNe:HMGR1} Hofmann, N., M{\"u}ller-Gronbach, T., Ritter, K.: Optimal approximation of stochastic differential equations by adaptive step-size control. Math. Comput. \textbf{69}, 1017--1034 (2000)
\bibitem{MiNe:HMGR2} Hofmann, N., M{\"u}ller-Gronbach, T., Ritter, K.: Step size control for the uniform approximation of systems of stochastic differential equations with additive noise. Ann. Appl. Probab. \textbf{10}, 616--633 (2000)
\bibitem{MiNe:HMGR3} Hofmann, N., M{\"u}ller-Gronbach, T., Ritter, K.: The optimal discretization of stochastic differential equations. J. Complexity \textbf{17}, 117--153 (2001)
\bibitem{MiNe:HMGR4} Hofmann, N., M{\"u}ller-Gronbach, T., Ritter, K.: Linear vs standard information for scalar stochastic differential equations. J. Complexity \textbf{18}, 394--414 (2002)
\bibitem{MiNe:HK} Hurd, T.R., Kuznetsov, A.: Explicit formulas for Laplace transforms of stochastic integrals. Markov Process. Relat. Fields \textbf{14}, 277--290 (2008)
\bibitem{MiNe:HuJeNo}Hutzenthaler, M., Jentzen, A., Noll, M.: Strong convergence rates and temporal regularity for Cox-Ingersoll-Ross processes and Bessel processes with accessible boundaries. 
https://doi.org/10.48550/arXiv.1403.6385 (2014)
\bibitem{MiNe:JMGY}Jentzen, A., Müller-Gronbach, T., Yaroslavtseva, L.: On stochastic differential equations with arbitrary slow convergence rates for strong approximation. Commun. Math. Sci. \textbf{14}, 1477--1500 (2016)
\bibitem{MiNe:survey} M{\"u}ller-Gronbach, T., and Ritter, K.: Minimal errors for strong and weak approximation of stochastic differential equations. In: Keller, A., Heinrich, S., Niederreiter, H. (eds) Monte Carlo and Quasi-Monte Carlo methods 2006. pp. 53--82, Springer, Berlin (2008)
\bibitem{MiNe:Mi-Diss} Mickel, A.: Weak and strong approximation of the log-Heston Model by Euler-type methods and related topics. Dissertation, University of Mannheim (2023)
\bibitem{MiNe:MiNe} Mickel, A., Neuenkirch, A.: Sharp ${L}^1$-Approximation of the log-{H}eston {SDE} by {E}uler-type methods. 
https://doi.org/10.48550/arXiv.2206.03229 (2022)
\bibitem{MiNe:tmg2002} M\"uller-Gronbach, T.: The optimal uniform approximation of systems of stochastic differential equations. Ann. Appl. Probab. \textbf{12}, 664--690 (2002)
\bibitem{MiNe:tmg-habil} M\"uller-Gronbach, T.: Strong approximation of systems of stochastic differential equations. Habilitation thesis, Technical University of Darmstadt (2002)
\bibitem{MiNe:tmg2004} M\"uller-Gronbach, T.: Optimal pointwise approximation of {SDEs} based on {Brownian} motion at discrete points. Ann. Appl. Probab. \textbf{14}, 1605--1642 (2004) 
\bibitem{MiNe:Yaros2} M{\"u}ller-Gronbach, T., Yaroslavtseva, L.: A note on strong approximation of {SDEs} with smooth coefficients that have at most linearly growing derivatives. J. Math. Anal. Appl. \textbf{467}, 1013--1031 (2018)
\bibitem{MiNe:Newton1986} Newton, N.J.: An asymptotically efficient difference formula for solving stochastic differential equations. Stochastics. \textbf{19}, 175--206 (1986)
\bibitem{MiNe:Newton1990} Newton, N.J.: An efficient approximation for stochastic differential equations on the partition of symmetrical first passage times. Stochastics Stochastics Rep. \textbf{29}, 227--258 (1990)
\bibitem{MiNe:Newton1991} Newton, N.J.: Asymptotically efficient {Runge}-{Kutta} methods for a class of {It{\^o}} and {Stratonovich} equations. SIAM J. Appl. Math. \textbf{51}, 542--567 (1991)
\bibitem{MiNe:IBC} Traub, J., Wasilkowski, G., Woźniakowski, H. Information-based complexity. Academic Press, Inc., Boston, MA (1988)
\bibitem{MiNe:Yaros1} Yaroslavtseva, L.: On non-polynomial lower error bounds for adaptive strong approximation of {SDEs}. J. Complexity. \textbf{42}, 1--18 (2017)

\end{thebibliography}
%

\end{document}